\theoremstyle{plain}
\newtheorem{lem}{Lemma}
\newtheorem{prop}{Proposition}
\newtheorem{thm}{Theorem}
\newtheorem{cor}{Corollary}
\newcommand{\ily}{i_\lambda(y)}
\newcommand{\ilx}{i_\lambda(x)}
\newcommand{\norm}[1]{\left\Vert#1\right\Vert}
\newcommand{\bc}{\begin{center}}
	\newcommand{\ec}{\end{center}}
\def\real{{\mathbb{R}}}
\numberwithin{equation}{section}
\numberwithin{thm}{section}
\numberwithin{lem}{section}
\numberwithin{cor}{section}
\numberwithin{prop}{section}
\begin{document}
	
	\title[Random waves]{Critical radii and suprema of random waves over Riemannian manifolds}
	
	\author[Feng]{Renjie Feng}
	
	\author[Yao]{Dong Yao}
	\author[Adler]{Robert J.\ Adler}

	\address{Sydney Mathematical Research Institute, The University of Sydney, Australia.}
	\email{renjie.feng@sydney.edu.au}

	
	
	\address{School of Mathematics and Statistics/RIMS, Jiangsu Normal University, China.}
	\email{dongyao@jsnu.edu.cn}
	
	\address{Andrew and Erna Viterbi Faculty of Electrical  Engineering, Technion, Haifa, 32000. }
	\email{radler@technion.ac.il}
	\date{\today}
	\maketitle
	
	\begin{abstract}

		We study random waves on  smooth,  compact, Riemannian manifolds under the {spherical ensemble}.  Our first main result shows that there is a positive universal limit  for the critical radius of a specific deterministic embedding, defined via the eigenfunctions of the  Laplace-Beltrami operator, of such  manifolds into higher dimensional Euclidean spaces. This result enables the application of Weyl's tube formula to derive   the tail probabilities for the suprema of random waves. Consequently, the estimate for the expectation of the Euler characteristic of the excursion set follows directly.		
		
	\end{abstract}

	\section{Introduction}
	
	Random waves on smooth, compact Riemannian manifolds have been a central topic in random geometry, driven by Berry's conjectures  (e.g. \cite{Berry}), which established a connection between random waves and the eigenstates of semi-classical quantum Hamiltonian systems. Recently, they have become a focus of intense studies due to their intrinsic mathematical properties. Although the problems  are often simple to state, rigorous solutions typically demand significant efforts and advanced tools.

	
	A classical problem, with applications in a wide range of (non-mathematical as well as mathematical) disciplines, is to find useful expressions (almost always approximations) for the 
	\emph{excursion probability} 
	\begin{equation}
		\mathbb{P}\left\{\sup _{x \in M} f(x) \geq u\right\}
		\label{excursionprob}
	\end{equation} 
	of  a real-valued random process $f(x)$ defined on a smooth parameter space $M$, typically for high levels $u$.
	
	The setting that interests us is when 
	$({M}, g)$ is a $d$-dimensional, smooth,  compact Riemannian manifold  without boundary. Without loss of generality, we always assume that the metric is normalized so that the associated volume satisfies $$\operatorname V_g(M)=1.$$  Let $\Delta_g$ denote the Laplace-Beltrami operator  with respect to $g$. We consider  an orthonormal basis $\left\{\varphi_n\right\}$ of eigenfunctions of $\Delta_g$, corresponding to eigenvalues $\left\{-\lambda_n^2\right\}$,  ordered and indexed with multiplicity.  That is, 
	$$
	\Delta_g \varphi_n\ =\ -\lambda_n^2 \varphi_n,\quad 0 = \lambda_0 < \lambda_1 \leq \lambda_2 \leq \cdots
	$$
	and 
	$$
	\int_{{M}} \varphi_n(x)  \varphi_m(x) \mathrm{dV}_g(x) \ =\ \delta_{n,m}.
	$$
	
	
	Given $\lambda \in \mathbb{R}_{+}$,  
	we define $\mathcal{E}_{\lambda }$ as the eigenspace 
	\begin{equation}
		\label{eigenspace:equn}
		\mathcal{E}_{\lambda }\ :=\ \left\{\varphi_n: \Delta_g \varphi_n=-\lambda_n^2 \varphi, \, {\lambda_n} \leq \lambda\},\right.
	\end{equation} 
	and we denote its dimension by
	$$k_\lambda:= \operatorname{dim}\mathcal{E}_{\lambda }.$$
	Now, for the spectral projection operator $$K_\lambda:L^2\to \mathcal{E}_{\lambda },$$
	its kernel reads
	\begin{equation}
		\label{kernel:equn}
		K_\lambda(x, y):=\sum_{\lambda_n \leq \lambda} \varphi_n(x) \varphi_n(y).
	\end{equation} 
	Random waves $\Phi_\lambda:M\to \real $   under the \emph{spherical ensemble} are defined as
	\begin{equation} \label{rms}\Phi_\lambda(x)=\sum_{i=1} ^{k_{\lambda }} a_i \varphi_i,\end{equation} 
	where   $a:=(a_1,..., a_{k_\lambda})$ is sampled uniformly on the unit sphere $S^{k_\lambda-1}$  so that
	$$\sum_{i=1}^{k_\lambda}|a_i|^2=1, $$
	and thus, by the orthogonality of the eigenfunctions, the $L^2$-norm of the random waves satisfies
	$$\|\Phi_\lambda(x)\|^2_{L^2}=1.$$ 
	
	Our motivating aim is to estimate the excursion probability \eqref{excursionprob} for $\Phi_\lambda$. Several well-established techniques exist for deriving excursion probabilities of random fields.  The main ones, heavily used for Gaussian random waves, are   the tube method \cite{S1, TK02} and the Euler characteristic method \cite{ TA03, TTA05}.   While  initially looking quite different,  these are known to be basically equivalent in the cases where both methods apply  \cite{TK02}.  In the current paper, we shall  exploit the tube method, for our non-Gaussian, spherical ensemble, setting. 
	
	One demand that all of these techniques have in common is that the underlying manifold $M$ is reasonably smooth, which we assume throughout. 
	A second demand is that the sample paths of the random wave are not too rough.  They  are usually required to be at least $C^2$. This is not a trivial demand for $\Phi_\lambda$, at least as $\lambda\to\infty$, which is the setting that will interest us.
	
	To see why this should be the case, it suffices to consider the simplest one of all examples, $M=S^1$ endowed with the usual Euclidean  metric. In that case the eigenfunctions are sine and cosine functions of ever increasing (with $\lambda$) frequency, and so, while, on the one hand, eigenfunction alone is  infinitely differentiable,  even their first  order derivatives diverge to infinity with $\lambda$.
	
	
	%
	
	
	We conclude the introduction part with some historical remarks and a roadmap to the remainder of the paper.
	
	The connection between the excursion probability  of a  random process and Weyl's tube formula for the excursion set seems to have been first  discovered in a purely statistical setting as early as 1939  \cite{HOT}  and then picked up almost half a century later, again in a mainly statistical setting, in \cite{JJ}. The introduction of this approach to general Gaussian processes is due to 
	\cite{S1}, assuming the existence of a lower bound on the critical radius of certain embedded submanifolds. For the more recent literature, see, for example,  Theorem 10.6.1 in \cite{AT}.  
	
	
	
	In the following section, we will clarify the notions introduced thus far, particularly the concept of the critical radius of a set, and present the main results. Section \ref{cr} focuses on the local Weyl law of the spectral projection kernel. Sections \ref{embeddingproofs:sec} and \ref{proofoftheorem1} address the study of the local geometry and the critical radius of the embedding, respectively. Finally, Section \ref{tail} provides the result and proof concerning the excursion probability.
	
	
	\section{Main results}
	
	Recall that our setting is that of a compact, smooth Riemannian manifold $M$, without boundary.   The functions  $\left\{\varphi_1, \cdots, \varphi_{k_\lambda}\right\}$ form an orthonormal basis for the eigenspace $\mathcal{E}_{\lambda}$ of \eqref{eigenspace:equn}, and  the kernel of  spectral projection operator $K_\lambda:L^2\to \mathcal{E}_{\lambda }$ is 
	given by \eqref{kernel:equn}.
	
	This allows us to define 
	the all-important deterministic mapping which embeds $M$ into a higher dimensional space.

	\subsection{A deterministic embedding}
	Consider the  mapping  $i_\lambda: M \rightarrow \mathbb{R}^{k_\lambda}$ defined by
	\begin{equation}\label{map}
		i_\lambda( x) \ = \   K_\lambda(x,x)^{-1 / 2}\left(\varphi_1(x), \ldots, \varphi_{k_\lambda}(x)\right)^T. 
	\end{equation}  
	It is immediate from \eqref{kernel:equn} that  $\| i_\lambda\|=1$,  so that $i_\lambda$ is actually a mapping into the $(k_\lambda -1)$-dimensional sphere, viz:
	$$i_\lambda: M\to S^{k_\lambda-1}\subset \mathbb{R}^{k_\lambda}.$$
	This mapping has a long history, and it is known that,  for sufficiently large values of $\lambda$, it serves as an embedding of $M$ into $S^{k_\lambda-1}\subset \mathbb{R}^{k_\lambda}$ \cite{P, Ze4, Ze3}, i.e.,
	$$i_\lambda: M\hookrightarrow S^{k_\lambda-1}\hookrightarrow  \mathbb{R}^{k_\lambda}, \quad \lambda\gg 1.$$
	
	While our interest in the map $i_\lambda$ comes from the Riemannian geometry setting, its 
	construction is analogous to that of the classical Kodaira embedding for compact complex manifolds, where eigenfunctions are replaced by holomorphic sections of positive  holomorphic line bundles \cite{GH}.   Similar maps have been  defined via heat kernels \cite{BGG, JMS}, and random versions of it appear in the purely Gaussian literature  \cite{AKTW}.
	
	\subsection{Critical radius}
	The notion of the critical radius of a set has its genesis in the early integral geometry of a century ago, but came into its own under the title  of ``positive reach"  in the  geometric measure theory of \cite{Federer} some fifty years later. The review in Section 2 of \cite{AKTW}  and treatments in
	\cite{AT, TK02} would suffice  for our purposes.
	
	In essence,  the {critical radius} of a set  provides a quantitative  measure of how far, or close, it is to being locally convex, or how  `twisted' it is
	as a subset of its ambient space. 
	
	More precisely, suppose that $M$ is a smooth manifold embedded in an ambient manifold $\mathcal {M}$. The \emph{local critical radius} at a given point $p \in M$ is the maximum distance one can travel along the geodesic in $\mathcal {M}$ that starts at $p$ and is normal to $M$ in $\mathcal {M}$, without encountering a similar vector originating from another point in $M$. The \emph{critical radius} of $M$ is then defined as the infimum of all the local critical radii.  It reflects both local curvature and global topology of the embedding of $M$ in the ambient space $\mathcal  M$. While  the local properties of the embedding are captured through its second fundamental form, the critical radius also takes into account points on $M$ which are far apart in terms of their geodesic distance in $M$ but close in the ambient space $\mathcal {M}$. 
	
	A formal way of computing the  critical radius of a smooth,   compact Riemannian manifold $M$ embedded in the ambient  Euclidean space $\mathbb{R}^k$ is given by
	\begin{equation}\label{critical}
		r(M)=\inf _{x, y \in M} \frac{\|x-y\|^2}{2\left\|\mathcal{P}_y^{\perp}(x-y)\right\|},
	\end{equation} 
	where $\mathcal{P}_y^{\perp}(x-y)$ is the projection of $x-y$ to the normal bundle at $y$ (cf. \cite{JJ, TK02}). 
	
	When $x$ and $y$ in the above infimum are far apart, the ratios inside the infimum in 
	\eqref{critical} are relatively easy to handle. This is the `global' part of the critical radius. However, 
	as $x\to y$, both the numerator and denominator  in formula \eqref{critical} vanish. Nevertheless,  this can be handled by showing  that this limit is related to the second fundamental form and the principal curvature of the embedding. To be more precise, we introduce the inverse function 
	\begin{equation}\label{hxy}h(x,y):=\frac{2\left\|\mathcal{P}_y^{\perp}(x-y)\right\|} {\|x-y\|^2}, \end{equation}
	for which (see A.2. in \cite{KT3}),
	\begin{equation}\label{lim32}\limsup_{x\to y} h(x,y)
		=\max _{w \in T_y(M)^{\perp} \cap S^{k-1}}\left|\lambda_{\max }(y,w)\right|,
	\end{equation}
	where $\left|\lambda_{\max }(y,w)\right|$ is the principal curvature at $y$  that has the largest absolute value with respect to the directions $\pm w$.  
	Consequently, $1 /(\max_\omega \left|\lambda_{\max }(y,w)\right|)$ is an upper bound for the local critical radius at $y $.     
	
	\subsection{Main results} \label{mainrs}

	We now turn our attention to  the local geometry of $i_\lambda(M)$ as a subset of 
	$\mathbb{R}^{k_\lambda}$.  Our first result in this direction is the following
	
	\begin{thm}\label{localrad}    
		Consider the embedding  $i_\lambda(M)\hookrightarrow  \mathbb{R}^{k_\lambda}$. We have,  \begin{equation}\label{them21}\lim_{\lambda\to\infty}\inf_{\operatorname{d_g}(x, y)\leq {(\lambda\log\lambda})^{-1}}
			\frac{\|\ilx-\ily\|^2}{2\left\|\mathcal{P}_y^{\perp}(\ilx-\ily)\right\|}= \sqrt{\frac{d+4}{3(d+2)}}, \end{equation}
		and  the largest absolute value of the principal curvature is asymptotic to $ \sqrt{\frac{3(d+2)}{d+4}}$. 
	\end{thm}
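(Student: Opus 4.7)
The plan is to reduce the ratio in \eqref{them21} to an expansion of the normalized kernel
$P_\lambda(x,y) := \langle i_\lambda(x), i_\lambda(y)\rangle = K_\lambda(x,y)/\sqrt{K_\lambda(x,x)K_\lambda(y,y)}$,
and then feed in the local Weyl law of Section \ref{cr}. Using $\|i_\lambda(y)\|\equiv 1$, one has $\langle i_\lambda(y),\partial^{y}_{j}i_\lambda(y)\rangle=0$, hence the tangential component of $i_\lambda(x)-i_\lambda(y)$ at $i_\lambda(y)$ is its projection on $\mathrm{span}\{\partial^{y}_{j}i_\lambda(y)\}$, and
\begin{align*}
\|i_\lambda(x)-i_\lambda(y)\|^{2} &= 2(1-P_\lambda(x,y)),\\
\|\mathcal P_y^{\perp}(i_\lambda(x)-i_\lambda(y))\|^{2} &= 2(1-P_\lambda(x,y)) - \sum_{i,j}g^{*ij}(y)\,\partial^{y}_{i}P_\lambda(x,y)\,\partial^{y}_{j}P_\lambda(x,y),
\end{align*}
where $g^{*}_{ij}(y) := \partial^{x}_{i}\partial^{y}_{j}P_\lambda(x,y)|_{x=y}$ is the pull-back by $i_\lambda$ of the Euclidean metric on $\mathbb R^{k_\lambda}$. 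This rewrites both quantities entirely in terms of $P_\lambda$ and its derivatives.

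Next I would work in Riemannian normal coordinates at $y$ and set $s := \lambda\, d_g(x,y)$. The local Weyl law identifies the leading part of $K_\lambda(x,y)$ with the scaled Fourier transform of the indicator of the unit ball $B_{1}\subset\mathbb R^{d}$, and computing the moments $\omega_d^{-1}\int_{B_{1}}(\xi\cdot e)^{2k}d\xi$ for $k=1,2$ gives the universal expansion
\begin{equation*}
P_\lambda(x,y) = \tilde F(s) + (\text{error}), \qquad \tilde F(s) = 1 - \frac{s^{2}}{2(d+2)} + \frac{s^{4}}{8(d+2)(d+4)} + O(s^{6}),
\end{equation*}
together with $g^{*}_{ij}(y) = \lambda^{2}\delta_{ij}/(d+2) + \text{l.o.t.}$ The restriction $d_g(x,y)\le(\lambda\log\lambda)^{-1}$ forces $s = O(1/\log\lambda) = o(1)$ and is calibrated so that the H\"ormander-type remainder, after being differentiated twice in $x$ and $y$ as required above, is dominated by the $s^{4}$ leading contribution.

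Substituting these expansions, the $s^{2}$ pieces cancel between the two terms defining $\|\mathcal P_y^{\perp}(\cdot)\|^{2}$:
\begin{equation*}
\|\mathcal P_y^{\perp}(i_\lambda(x)-i_\lambda(y))\|^{2} = 2(1-\tilde F(s)) - (d+2)\,\tilde F'(s)^{2} + o(s^{4}) = \frac{3\,s^{4}}{4(d+2)(d+4)} + o(s^{4}),
\end{equation*}
whereas $\|i_\lambda(x)-i_\lambda(y)\|^{2} = s^{2}/(d+2) + O(s^{4})$. Taking the square root and forming the ratio annihilates the $s^{2}$'s and delivers the announced limit $\sqrt{(d+4)/(3(d+2))}$. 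Since the limit depends on neither $y$ nor on the direction $(x-y)/d_g(x,y)$ of approach, formula \eqref{lim32} forces every principal curvature of $i_\lambda(M)$ at every point to tend to the reciprocal $\sqrt{3(d+2)/(d+4)}$, which yields the curvature statement.

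The main obstacle I anticipate is uniformity. All the steps above are pointwise in $(x,y)$, while the theorem demands an infimum uniformly over $y\in M$ and over $x$ with $0 < d_g(x,y) \le (\lambda\log\lambda)^{-1}$. One therefore needs the $o(s^{4})$ remainder in $P_\lambda$, and the error in $g^{*}_{ij}$, to be uniform in $y$ and stable under the mixed second derivatives used above. The logarithmic factor in the scale is present precisely to buy this: it makes $s\to 0$ slowly enough that the H\"ormander remainder, even after two derivatives, is crushed by $s^{4}$ everywhere on $M$, which is exactly what the local Weyl law developed in Section \ref{cr} is set up to supply.
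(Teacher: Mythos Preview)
Your kernel identities
\[
\|i_\lambda(x)-i_\lambda(y)\|^2=2(1-P_\lambda(x,y)),\qquad
\|\mathcal P_y^{\perp}(i_\lambda(x)-i_\lambda(y))\|^2=2(1-P_\lambda)-\sum_{i,j}g^{*ij}\partial^y_iP_\lambda\,\partial^y_jP_\lambda
\]
are exactly Proposition~\ref{prop:preradius} of the paper, and the Bessel expansion of $\tilde F$ and the final algebra are correct. But there is a genuine gap in the uniformity step, and your diagnosis of the role of the scale $(\lambda\log\lambda)^{-1}$ is backwards.

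The off-diagonal Weyl law gives $P_\lambda(x,y)=\tilde F(s)+O(\lambda^{-1})$ with an \emph{absolute} remainder. After the $s^2$ cancellation in $2(1-\tilde F)-(d+2)\tilde F'{}^2$, the leading survivor is of size $s^4$; for your substitution to be legitimate you need $\lambda^{-1}=o(s^4)$, i.e.\ $s\gg\lambda^{-1/4}$. However the infimum in \eqref{them21} is over all $s\in(0,1/\log\lambda]$, and nothing prevents $s$ from being, say, $\lambda^{-1}$, where the H\"ormander remainder swamps $s^4$. The logarithmic factor only caps $s$ from \emph{above} (so that the $O(s^6)$ tail of $\tilde F$ and the cross-order Taylor terms are negligible); it does not keep $s$ away from $0$.

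The paper handles this by a different decomposition: it Taylor expands $i_\lambda(x)-i_\lambda(y)$ itself in normal coordinates at $y$ (Lemma~\ref{lem:mainlem}), writing it as $Ax^T+\tfrac12\mathcal H(x)+e_\lambda(x)$. The coefficients $A^TA$, $\|\mathcal H\|^2$, $\|\mathcal P_y\mathcal H\|^2$ are purely \emph{on-diagonal} quantities, so the Weyl law yields \emph{relative} errors $O(\lambda^{-1})$ at each homogeneity in $|x|$. Comparing successive Taylor orders costs only a factor $\lambda|x|=s\le 1/\log\lambda$, which is small uniformly in $s$, including as $s\to 0$. This is precisely the regime your argument misses. (The paper does use your kernel route, but only later, in Section~\ref{proofoftheorem1}, where $s$ is bounded below.) To salvage your approach you would have to Taylor expand $P_\lambda$ at the diagonal and feed in \eqref{plambda} rather than invoke the off-diagonal expansion \eqref{offdia}; that is equivalent to the paper's Lemma~\ref{lem:mainlem}.
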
 

	While the above result offers hope for a reasonable limit to the critical radius of $i_\lambda (M)$ in $\mathbb R^{k_\lambda}$, a result we shall prove, en passant, later raises problems. 
	
	Specifically, consider the  metric induced on $M$ by the map $i_\lambda$. We will show that the pullback metric satisfies the uniform estimate (see \eqref{dsdsdsds} below) \begin{equation} \label{dlsd}i^*_{\lambda}(g_{E}) =
		\left( \frac{\lambda^2}{d+2}+O(\lambda^{})\right)g\end{equation} 
	as  $\lambda$ becomes sufficiently large, where $g_E$  is the Euclidean metric on   
	$\mathbb{R}^{k_\lambda}$. 
	This  indicates that a geodesic of unit length on $M$ with respect to the Riemannian metric $g$ will be stretched by a factor of order $\lambda$ under the map $i_\lambda$ in the ambient space $\mathbb R^{k_\lambda}$ (or in the ambient space $S^{k_\lambda-1}$ since $\|i_\lambda\|=1$). Therefore, it would appear that the  embedded manifolds $i_\lambda(M)$ become highly twisted
	as $\lambda$ increases, and one might expect that the critical radius of the submanifold $i_\lambda(M)$ in  the ambient spaces   neither converges to a limit nor admits a lower bound. However, the following theorem contradicts this expectation. 
	\begin{thm} \label{main1} Let $r_{\lambda}$  be the critical radius of the embedding $i_\lambda(M)\hookrightarrow \mathbb{R}^{k_\lambda}$. Then $r_{\lambda}$  has the following positive universal limit which  depends only   on the dimension of $M$:  
		\begin{equation} 
			\label{limcri} 
			\begin{split}&\lim_{\lambda\to\infty}r_{ \lambda} \\  =&\inf _{u\geq 0} \frac{ 1-\Gamma(\frac d 2+1)(\frac2{u})^{\frac d2}J_{\frac d2}(u)}{\sqrt{ 2-2\Gamma(\frac d 2+1)(\frac2{u})^{\frac d2}J_{\frac d2}(u) -2\Gamma(\frac d 2+2)\Gamma(\frac d 2+1)\Big[\left(\left(\frac2{u}\right)^{\frac d2}J_{\frac d2}(u)\right)'\Big]^2 }},\end{split}
		\end{equation}  
		where $J_{d /2}$ is the Bessel function  of the first kind of order $ d /2$.
	\end{thm}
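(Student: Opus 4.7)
The plan is to compute $\lim_\lambda r_\lambda$ directly from formula \eqref{critical} by introducing the rescaling $u := \lambda\operatorname{d_g}(x,y)$ and splitting the infimum over pairs $(x,y) \in M\times M$ into three regimes: the near-diagonal regime where $u$ stays bounded, a mesoscopic regime where $u \to \infty$ but $\operatorname{d_g}(x,y) \to 0$, and a global regime $\operatorname{d_g}(x,y)\geq \delta$ for a fixed $\delta > 0$.

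First I would express the ratio
$$R_\lambda(x,y) := \frac{\|i_\lambda(x) - i_\lambda(y)\|^2}{2\|\mathcal{P}_y^{\perp}(i_\lambda(x)-i_\lambda(y))\|}$$
entirely in terms of $K_\lambda$ and its first derivatives. Since $i_\lambda(x) = K_\lambda(x,x)^{-1/2}(\varphi_j(x))_j$ and $\|i_\lambda\|\equiv 1$, the squared Euclidean distance equals $2 - 2K_\lambda(x,y)/\sqrt{K_\lambda(x,x)K_\lambda(y,y)}$. Differentiating $\|i_\lambda\|^2 \equiv 1$ gives $\langle i_\lambda(y), d i_\lambda(v)\rangle = 0$ for each $v \in T_y M$, so
$$\|\mathcal{P}_y^{\parallel}(i_\lambda(x)-i_\lambda(y))\|^2 = \sum_{j=1}^d \frac{\langle i_\lambda(x), d i_\lambda(e_j)\rangle^2}{\|d i_\lambda(e_j)\|^2}$$
for any $i_\lambda^* g_E$-orthonormal basis $\{e_j\}$ of $T_y M$, and each numerator is a first $y$-derivative of the normalized kernel.

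Next I would invoke the local Weyl law from Section \ref{cr} in the uniform form
$$\frac{K_\lambda(x,y)}{\sqrt{K_\lambda(x,x)K_\lambda(y,y)}} = \psi(u) + o(1),\qquad \psi(u) := \Gamma\!\bigl(\tfrac{d}{2}+1\bigr)\bigl(\tfrac{2}{u}\bigr)^{d/2} J_{d/2}(u),$$
together with the analogous asymptotics for the first derivative in $y$, uniform on $u \in [0,u_0]$ for any fixed $u_0$. The pullback metric estimate \eqref{dlsd} provides the normalizing factor $\lambda/\sqrt{d+2}$, and a direct computation then yields the scaling limit
$$R_\lambda(x,y) \longrightarrow \Phi(u) := \frac{1-\psi(u)}{\sqrt{2-2\psi(u)-(d+2)(\psi'(u))^2}},$$
which matches the right-hand side of \eqref{limcri} via the identity $(d+2)\Gamma(d/2+1)^2 = 2\Gamma(d/2+2)\Gamma(d/2+1)$. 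A short Taylor expansion of $\psi$ recovers $\lim_{u\to 0^+}\Phi(u) = \sqrt{(d+4)/(3(d+2))}$, consistent with Theorem \ref{localrad}, while Bessel asymptotics give $\lim_{u\to\infty}\Phi(u) = 1/\sqrt 2$ since $\psi,\psi' = O(u^{-(d+1)/2})$.

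For the global regime, standard off-diagonal estimates on $K_\lambda$ yield $K_\lambda(x,y)/\sqrt{K_\lambda(x,x)K_\lambda(y,y)} = o(1)$ uniformly, forcing $R_\lambda(x,y) \to 1/\sqrt 2 = \Phi(\infty)$, so this regime is already captured by the endpoint of $\Phi$ and cannot produce anything below $\inf_{u\geq 0}\Phi(u)$. The mesoscopic transition regime is where I expect the main technical obstacle: one must patch the local Weyl law uniformly on $C/\lambda \leq \operatorname{d_g}(x,y) \leq \delta$ and show that the errors in both the kernel and its first derivatives remain $o(1)$ as $u\to\infty$, which requires off-diagonal decay control one step stronger than the standard Weyl law provides. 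Once the three regimes are assembled, the continuous extension of $\Phi$ to $[0,\infty]$ captures every limiting value of $R_\lambda$, so $\lim_\lambda r_\lambda = \inf_{u\geq 0}\Phi(u)$, giving \eqref{limcri}.
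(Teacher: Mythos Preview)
Your proposal follows essentially the same strategy as the paper: express the ratio in terms of the normalized kernel $P_\lambda$ and its first $y$-derivatives (this is the paper's Proposition~\ref{prop:preradius}), plug in the local Weyl law to obtain the limiting profile $\Phi(u)$, and split by distance regime. Your endpoint values $\Phi(0^+)=\sqrt{(d+4)/(3(d+2))}$ and $\Phi(\infty)=1/\sqrt2$ are exactly those the paper uses. However, two of your regimes are not yet closed, and in one case your proposed fix is not the right one.

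\textbf{Near $u=0$.} Your claim of uniformity on $u\in[0,u_0]$ hides a genuine issue. The local Weyl law gives $P_\lambda=\psi(u)+O(\lambda^{-1})$, but $1-\psi(u)\asymp u^2$ and the quantity under the square root in $\mathcal D_\lambda$ is $\asymp u^4$, so the $O(\lambda^{-1})$ remainders swamp the main terms once $u\lesssim\lambda^{-1/2}$. The paper therefore carves out a separate regime $\operatorname{d_g}(x,y)\le(\lambda\log\lambda)^{-1}$ and handles it via Theorem~\ref{localrad}, whose proof uses a direct second-order Taylor expansion of $i_\lambda$ (Lemma~\ref{lem:mainlem}) rather than the kernel formula. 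Citing Theorem~\ref{localrad} only as a consistency check for $\Phi(0^+)$ is not enough; it must serve as an independent case.

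\textbf{The mesoscopic regime.} Your diagnosis that one needs ``off-diagonal decay control one step stronger than the standard Weyl law'' is incorrect; the paper succeeds with the standard $O(\lambda^{-1})$ remainder. The real obstruction is that $\Delta_2(u):=2-2\psi(u)-(d+2)\psi'(u)^2$ is \emph{not} bounded away from zero on $[c_2,\infty)$, so the error in $\mathcal D_\lambda^2$ cannot be absorbed uniformly. The paper's device (Case~\textcircled{4}) is to split $[c_2,\infty)$ into $\mathcal A_2=\{\Delta_2\ge c_1^2\}$, where the errors are trivially controlled, and $\mathcal A_1=\{\Delta_2<c_1^2\}$, where one instead exploits the uniform lower bound $\Delta_1(u)\ge c_1$ from \eqref{bels} to get $\mathcal N_\lambda/\mathcal D_\lambda\ge 1/\sqrt2$ directly, bypassing any comparison with $\Phi(u)$. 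No improved kernel estimate is needed; the point is that wherever $\Delta_2$ is small, $\Delta_1$ is already large enough that the ratio cannot fall below the infimum.
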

	
	Theorem \ref{main1} shows that the critical radius of the submanifold $i_\lambda(M)$ has a positive universal limit in $\mathbb{R}^{k_\lambda}$, which further implies the existence of a positive lower bound for the critical radius of $i_\lambda(M)$ in $S^{k_\lambda-1}$. 
	One possible explanation for this phenomenon is that, as $\lambda$ increases, while the embedded submanifolds are indeed stretched by a factor $\lambda$ in the ambient spaces $S^{k_\lambda-1}$, the growing dimensions of the ambient spaces $S^{k_\lambda-1}$ allow more space for the stretched submanifold, and so self-intersections, and even `near' self-intersections, are avoided. The competition between these two factors eventually leads to the positive limit for the critical radius.


	The proofs of Theorem \ref{localrad} and Theorem \ref{main1} rely on the observation that the critical radius of the embedding $i_\lambda(M)\hookrightarrow\mathbb{R}^{k_\lambda}$  can be expressed in terms of the spectral projection kernel, enabling us to apply the local Weyl law to determine its limit. Remarkably, during the proofs, we will observe that the leading-order term in the local Weyl law determines the limit of the critical radius completely, which turns out to be universal. In other words,  the error term in the local Weyl law, which has the geometry of the Riemannian manifolds $(M,g)$ involved, does not  influence the determination of the limiting critical radius.

	
	Note that  two specific precursors to the current results are \cite{FA} and \cite{FXA}, both treating much the same problem, but only in the special case of $M$ being a unit sphere endowed with the standard Euclidean metric, so that the eigenfunctions are spherical harmonics.
	The complex version of this problem was explored in \cite{S}.

	An analogous result to Theorem \ref{main1}  holds for random embeddings. 
	Consider a smooth and compact Riemannian manifold $M$, and define a centered, unit variance, smooth Gaussian process $f: M \rightarrow \mathbb{R}$. For a given $k \geq 1$, one can  construct a $\mathbb{R}^k$-valued process$$
	f^k(x):=\left(f_1(x), f_2(x), \ldots, f_k(x)\right)
	$$
	made up of the first $k$ processes in an infinite sequence of independent and identically distributed copies of $f$. Then, with probability 1, this process defines a Gaussian random embedding $M\hookrightarrow  \mathbb{R}^k$ for all $k \geq 2 d+1$. To  normalize the map, we define
	$$
	h^k(x):= \frac{f^k(x)}{\|f^k(x)\|}, \quad x \in M.
	$$ 
	This yields  a random embedding of $M\hookrightarrow S^{k-1}$. Let $r_k$ denote the critical radius of $h^k(M)$ in the ambient space $S^{k-1}$. Then the main result in \cite{AKTW} is that, with probability $1$, there exists  a constant $c_f$ depending on the Gaussian process $f(x)$, and well known in the Gaussian literature, such that 
	\begin{equation}
		\label{gausslim}
		\lim_{k\to \infty} r_k =c_f.
	\end{equation} 

	Now we are ready to apply the tube method to derive the excursion probability. Recall that $\|i_\lambda\|=1$. This, together with Theorem  \ref{main1},  implies that there is a lower bound for the critical radius of $i_\lambda(M)$ considered as a submanifold in the ambient space  $S^{k_\lambda-1}$. Let $\rho>0$ denote this new lower bound.  Define the tube of radius $\theta$ around $i_\lambda(M)$ in $S^{k_\lambda-1}$ by
	$$
	\operatorname{Tube}\left(i_\lambda(M), \theta\right) \ := \ \left\{x \in S^{k_\lambda-1}: \min _{y \in i_\lambda(M)}  \Theta(x, y) \leq \theta\right\},
	$$
	where $\Theta(x, y)$ is the angle, or the geodesic distance,  between vectors $x, y \in S^{k_\lambda-1}$. 
	
	Note that, after normalizing, the random waves defined in \eqref{rms} can be expressed in two additional ways. Specifically,
	$$
	\frac{\Phi_\lambda(x)}{\sqrt{K_\lambda(x,x)}} = \left\langle a, i_\lambda(x)\right\rangle= \cos \left(\Theta(a, i_\lambda(x))\right)  \leq  1,
	$$
	where, as before,  the random vector $a=\left(a_1, \ldots, a_{k_\lambda}\right)$ are chosen with respect to the  uniform measure on $S^{k_\lambda-1}$.
	
	Therefore, for $0<\theta<\rho$, we have the formula, 
	\begin{equation}\label{tuebf}\begin{split}\mathbb{P}\left\{\sup_{x\in M}\frac{\Phi_\lambda(x)}{\sqrt{K_\lambda(x,x)}}>\cos\theta\right\}&=\mathbb{P}_{}\left\{\sup _{x\in M}\left\langle a, i_\lambda(x)\right\rangle>\cos \theta\right\} \\&= 
			\frac{\mu_{k_{\lambda} - 1 }(\operatorname{Tube}\left(i_\lambda(M), \theta\right) )}{s_{k_\lambda-1}},\end{split}\end{equation}
	where $ {\mu_{k_\lambda-1}}(T)$ is the  Lebesgue measure of the subset $T\subset  S^{k_\lambda-1}$, and 
	$
	s_{k_\lambda-1} =  {2\pi^{k_\lambda/2}}/{ \Gamma\left({k_\lambda}/{2}\right)}
	$
	is the surface area of the unit sphere $S^{k_\lambda-1}$.
	
	Exploiting Weyl's tube formula for submanifolds of spheres (see \eqref{tubeforsphere} below), 
	we can deduce the following type of large deviation result. 
	\begin{thm}\label{main3} Let $\rho>0$ be the uniform lower bound of the critical radius $i_\lambda(M) \hookrightarrow S^{k_\lambda-1}$.  For any $0<\theta< \rho$, the excursion probability satisfies, 
		$$\begin{aligned}
		&\lim_{\lambda\to\infty}\frac{1}{\lambda^d} \log \mathbb{P}\left\{\sup _{x\in M}\frac{ \Phi_\lambda(x)}{\sqrt{K_\lambda(x,x)}}>\cos\theta \right\}=\frac{\log \sin (\theta)}{(4\pi)^{d/2}\Gamma\left({d}/{2}+1\right)} . \end{aligned}$$
	\end{thm}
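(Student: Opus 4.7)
The plan is to begin from the tube identity \eqref{tuebf}, which expresses the excursion probability as the ratio $\mu_{k_\lambda-1}(\operatorname{Tube}(i_\lambda(M),\theta))/s_{k_\lambda-1}$ of volumes on $S^{k_\lambda-1}$, and to extract the exponential asymptotics by applying Weyl's tube formula to the submanifold $i_\lambda(M)\hookrightarrow S^{k_\lambda-1}$.

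First I would invoke Theorem \ref{main1}, together with $\|i_\lambda\|=1$, to secure a $\lambda$-uniform lower bound $\rho>0$ for the critical radius of $i_\lambda(M)$ in $S^{k_\lambda-1}$. For any fixed $0<\theta<\rho$, Weyl's tube formula on the sphere applies and expresses $\mu_{k_\lambda-1}(\operatorname{Tube}(i_\lambda(M),\theta))$ as a finite sum (of at most $\lfloor d/2\rfloor+1$ terms) of products of two factors: (i) a curvature integral $\int_{i_\lambda(M)}\mathcal{I}_e\,d\mathrm{Vol}$, where $\mathcal{I}_e$ is a scalar invariant of degree $2e$ in the second fundamental form of $i_\lambda(M)$ inside $S^{k_\lambda-1}$; and (ii) an analytic factor proportional to $\int_0^\theta \sin^{k_\lambda-d-2+2e}(r)\cos^{d-2e}(r)\,dr$ multiplied by a sphere area $s_{k_\lambda-d-2+2e}$ and a constant depending only on $d$ and $e$.

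Next I would show that every summand contributes at the same exponential rate $k_\lambda\log\sin\theta$, so that the leading (volume) term $e=0$ dictates the answer. On the geometric side, the pullback identity \eqref{dlsd} yields $\mathrm{Vol}(i_\lambda(M))=\lambda^d/(d+2)^{d/2}+O(\lambda^{d-1})$, while Theorem \ref{localrad} bounds the principal curvatures of $i_\lambda(M)$ uniformly in $\lambda$, so each $|\mathcal{I}_e|$ is bounded and the curvature integrals are $O(\lambda^d)$. On the analytic side, a Laplace estimate gives
\begin{equation*}
\int_0^\theta \sin^{k_\lambda-d-2+2e}(r)\cos^{d-2e}(r)\,dr=\frac{(\sin\theta)^{k_\lambda-d-1+2e}(\cos\theta)^{d-1-2e}}{k_\lambda-d-1+2e}\bigl(1+o(1)\bigr),
\end{equation*}
and Stirling's formula turns $s_{k_\lambda-d-2+2e}/s_{k_\lambda-1}$ into a factor polynomial in $k_\lambda$. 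Combining with Weyl's counting law $k_\lambda=\lambda^d/[(4\pi)^{d/2}\Gamma(d/2+1)]+O(\lambda^{d-1})$ (valid since $\operatorname V_g(M)=1$), taking logarithms and dividing by $\lambda^d$ produces the stated limit, because every summand contributes $k_\lambda\log\sin\theta+O(\log\lambda)$ to $\log[\mu_{k_\lambda-1}(\operatorname{Tube})/s_{k_\lambda-1}]$.

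The main obstacle lies in the second step: one must check that the scalar invariants $\mathcal{I}_e$ of $i_\lambda(M)$ remain pointwise bounded as $\lambda\to\infty$, even though the pullback metric blows up like $\lambda^2$ by \eqref{dlsd}. Theorem \ref{localrad}, which pins down the asymptotic size of the largest principal curvature, supplies precisely this uniform control since every $\mathcal{I}_e$ is a polynomial in the components of the second fundamental form. A secondary bookkeeping issue is ensuring that the Laplace asymptotic for the trigonometric integrals holds uniformly in $e\in\{0,\ldots,\lfloor d/2\rfloor\}$ with sharp leading constants, so that no spurious exponential contribution creeps in when summing.
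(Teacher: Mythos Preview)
Your proposal is correct and follows essentially the same route as the paper: the tube identity \eqref{tuebf}, Weyl's tube formula on $S^{k_\lambda-1}$, Laplace's method for the integrals $G_{q,b}(\theta)$, and the Weyl count $k_\lambda\sim \lambda^d/[(4\pi)^{d/2}\Gamma(d/2+1)]$. The one substantive difference is in how the non-leading curvature terms are controlled: you invoke Theorem~\ref{localrad} to bound the second fundamental form and hence get $\int_{i_\lambda(M)}|\mathcal I_e|\,d\mathrm{Vol}=O(\lambda^d)$, whereas the paper works intrinsically, computing the curvature tensor of $g_\lambda$ from kernel derivatives (the ``stochastic metric'' viewpoint) and obtaining the sharper $\mathcal L_j(i_\lambda(M))=O(\lambda^{(d+j)/2})$; both bounds suffice because the analytic factors already suppress the $e\ge 1$ terms by a factor $\sim k_\lambda^{-e}$, so the volume term dominates either way. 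Your shortcut via Theorem~\ref{localrad} is cleaner for the purpose of this log-limit, while the paper's sharper intrinsic estimate carries extra information not needed here.
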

	
	Now, for $0<\theta<\rho$, we can study  the Euler characteristic of the following \emph{excursion set},  
	$$A_\lambda(\cos\theta)\ :=\ \left\{x \in M: \frac{\Phi_\lambda(x)}{\sqrt{K_\lambda(x,x)}}>\cos \theta \right\}.$$
	
	Note that, in general,  if $M$ is a smooth,  compact submanifold of   $\mathcal {M}$, for any $p \in  \mathcal {M}$, the intersection between $M$ and a ball of radius $\rho$ around $p$ will be either empty or contractible if $\rho$ is less than the critical radius of $M$ in  $\mathcal {M}$ \cite{S}. 
	This implies that in our case the set 
	$$
	\left\{i_\lambda(x) \in S^{k_\lambda-1}:\left\langle a, i_\lambda(x)\right\rangle  > \cos \theta\right\}
	$$ 
	is either empty or contractible for $0 \leq \theta \leq \rho$. Therefore, for such values of $\theta$,  the expected Euler characteristic of the excursion set  can be expressed in terms of the volume of the tube of the embedding as follows, 
	$$\begin{aligned}
		\mathbb{E} \left\{\chi\left(A_\lambda(\cos \theta)\right)\right\} =&\mathbb{E}\left\{\chi\left\{i_\lambda(x) \in S^{k_\lambda-1}:\left\langle a, i_\lambda(x)\right\rangle>\cos \theta\right\}\right\} \\
		=&\mathbb{P}_{}\left\{\sup _{x\in M}\left\langle a, i_\lambda(x)\right\rangle>\cos \theta\right\}
		\\ =& \frac{\mu_{k_{\lambda} - 1 }(\operatorname{Tube}\left(i_\lambda(M), \theta\right) )}{s_{k_\lambda-1}},
	\end{aligned} $$
	where $\chi(A)$ denotes the Euler characteristic of the set $A$.  Therefore, as a direct consequence of Theorem \ref{main3}, we have,

	\begin{cor}For any $0<\theta< \rho$, 
		the expected Euler characteristic of the excursion set has the limit,  
		$$ \lim_{\lambda\to\infty}	\frac{1}{\lambda^d} \log \mathbb{E} \chi\left\{x\in M:\frac{\Phi_\lambda(x)}{\sqrt{K_\lambda(x,x)}}>\cos\theta\right\} =\frac{ \log \sin (\theta)}{(4\pi)^{d/2}\Gamma\left({d}/{2}+1\right)}. $$
	\end{cor}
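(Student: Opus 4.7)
The plan is to derive the corollary as an immediate consequence of Theorem \ref{main3}, using the identity between the expected Euler characteristic and the excursion probability that is essentially established in the discussion immediately preceding the corollary. The only work left is to verify that identity carefully and then to invoke Theorem \ref{main3}.

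First, I would set up the Euler characteristic computation. By Theorem \ref{main1} together with $\|i_\lambda\|=1$, the critical radius of $i_\lambda(M)\hookrightarrow S^{k_\lambda-1}$ admits a uniform lower bound $\rho>0$. Fix $0<\theta<\rho$. Since the intersection of a submanifold with a geodesic ball of radius less than the critical radius is either empty or contractible, the random set
$$B_\lambda(a,\theta)\ :=\ \{i_\lambda(x)\in S^{k_\lambda-1}:\langle a, i_\lambda(x)\rangle>\cos\theta\}$$
is, almost surely and for each realisation of $a$, either empty or contractible. Because $i_\lambda$ is a diffeomorphism onto its image for $\lambda$ large, pulling back gives that $A_\lambda(\cos\theta)$ has the same topology as $B_\lambda(a,\theta)$, so its Euler characteristic equals the indicator of non-emptiness. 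Taking expectation,
$$\mathbb{E}\,\chi(A_\lambda(\cos\theta))\ =\ \mathbb{P}\{B_\lambda(a,\theta)\neq \emptyset\}\ =\ \mathbb{P}\left\{\sup_{x\in M}\langle a, i_\lambda(x)\rangle>\cos\theta\right\}.$$

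Second, I would apply Theorem \ref{main3} directly. Taking logarithms, dividing by $\lambda^d$, and sending $\lambda\to\infty$ substitutes the right-hand side of the above identity by its asymptotic value, yielding the claimed limit $\log\sin(\theta)/[(4\pi)^{d/2}\Gamma(d/2+1)]$.

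There is in fact no serious obstacle in this corollary itself: all of the hard work has already been carried out in Theorem \ref{main1} (which provides the lower bound $\rho$ that keeps the tube well-defined and the excursion components contractible) and in Theorem \ref{main3} (which supplies the large deviation asymptotics via Weyl's tube formula on the sphere). The only mildly subtle point is the first paragraph, where one must justify that the pre-image $A_\lambda(\cos\theta)\subset M$ inherits contractibility from $B_\lambda(a,\theta)\subset i_\lambda(M)$; this is immediate from the fact that, for large $\lambda$, $i_\lambda$ is an embedding.
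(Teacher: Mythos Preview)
Your proposal is correct and follows essentially the same approach as the paper: the paper also derives the corollary as a direct consequence of Theorem \ref{main3} after observing (in the paragraph immediately preceding the corollary) that for $0<\theta<\rho$ the excursion set is either empty or contractible, so that $\mathbb{E}\chi(A_\lambda(\cos\theta))$ coincides with the excursion probability. Your added remark that contractibility of $A_\lambda(\cos\theta)\subset M$ is inherited from $B_\lambda(a,\theta)\subset i_\lambda(M)$ via the embedding $i_\lambda$ is a reasonable clarification that the paper leaves implicit.
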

	
	In this article, we only consider the random waves under the spherical ensemble. However, another significant class of random waves, extensively studied under the \emph{Gaussian ensemble}, is defined as follows:
	\begin{equation} \label{gausswaves}\Psi_\lambda(x)=\sum_{i=1} ^{k_{\lambda }} b_i \varphi_i,\end{equation} 
	where   $b_i$ are i.i.d. Gaussian random variables with mean 0 and variance $1/k_\lambda$. Consequently, the expected $L^2$-norm satisfies
	$$\mathbb E\|\Psi_\lambda(x)\|^2_{L^2}=1.$$ 
	A natural question is to derive the large deviations for the following excursion probabilities, in the same manner as Theorem \ref{main3},  for fixed $u>0$ as $\lambda\to\infty$, $$\mathbb P\left\{\sup_{x\in M} \frac{\Psi_\lambda(x)}{\sqrt{K_\lambda(x,x)}}>u\right\}.$$
	We aim to address it in future investigations.

	\section{A local Weyl law}
	\label{cr}
	
	The local Weyl law was established by H\"ormander in \cite{H}. It provides  asymptotic expansions for  spectral projection kernels. These expansions  will be crucial for us in the proofs of Theorems  \ref{localrad}  and \ref{main1}, and we collect what we will need in this section.

	Let $\omega_d$ be the volume of the unit ball $B_d\subset \mathbb{R}^d$, 
	\begin{equation}\label{sigma}
		\omega_d:=\frac{\pi^{d / 2}}{\Gamma\left({d}/{2}+1\right)}.
	\end{equation} 
	For $x \in \mathbb{R}^d$  define the function, 
	$$
	\mathcal{B}_d(x):=\frac{1}{\omega_d} \int_{\|\xi\| \leq 1} e^{i\langle x, \xi\rangle} \mathrm{d} \xi.
	$$
	It follows from  the definition of Bessel functions that we also have
	\begin{equation}\label{bfunction}
		\mathcal{B}_d(x)=\frac{1}{\omega_d}\left(\frac{2 \pi}{\|x\|}\right)^{d / 2}  {J}_{\frac{d}{2}}(\|x\|) ,
	\end{equation} 
	where $J_{\frac d 2}(x)$ is the Bessel function  of the first kind with order $d/ 2$,  having the series expansion, 
	\begin{equation}\label{jfunction}J_{\frac{d}{2}}(x)=\sum_{j=0}^{\infty} \frac{(-1)^j}{j!\Gamma(j+\frac{d}{2}+1)} \left(\frac{x}{2}\right)^{2j+\frac{d}{2}}.\end{equation} 
	
	Note that  since $\mathcal B_d$ actually only depends on $x$ via its norm $\|x\|$. Hereafter we shall abuse notation by letting  $\mathcal B_d$ also denote a function on $\real_+$, so that  $\mathcal B_d(x) \equiv \mathcal B_d(\|x\|)$. It should always be clear in what follows to which $\mathcal B_d$ we are referring.
	A fact\footnote{This can be proved by using the relation \eqref{bfunction} and combining two facts: (1)   $\mathcal{B}_d(x)<1$ for all $x>0$; and (2)  $\mathcal{B}_d(x)$ converges to 0 as $\norm{x}\to\infty$.} about Bessel function to be used later is that
	\begin{equation}\label{bels}
		\sup_{x>c}   \Gamma\left(\frac d 2+1\right)
		\left( \frac2{u}\right)^{\frac d2}J_{\frac d2}(u) <1,  \quad \forall \, c>0.
	\end{equation}

	The local Weyl law asserts that there exists some 
	$\eta>0$ such that  if the geodesic distance $ \operatorname{d_g}(x,y)$ between $x$ and $y$ is
	less than $\eta$, then the spectral projection kernel of \eqref{kernel:equn} satisfies the asymptotic expansion (e.g., \cite{BC, G, SHu, X})
	\begin{equation}\label{offdia}
		K_\lambda(x, y)=\frac{\omega_d}{(2 \pi)^d} \lambda^d \mathcal{B}_d(\lambda \operatorname{d_g}(x, y))+O(\lambda^{d-1}).
	\end{equation}  
	Taking $x=y$ in the local Weyl law gives the estimate
	\begin{equation}\label{exp1}
		K_\lambda(x, x)=\frac{\omega_d}{(2 \pi)^d} \lambda^d+O\left(\lambda^{d-1}\right), \end{equation} 
	and thus  the dimension of the eigenspace $\mathcal E_\lambda$ (recall \eqref{eigenspace:equn}) satisfies, 
	\begin{equation}\label{exp2}  \quad k_\lambda=\frac{\omega_d}{(2 \pi)^d} \lambda^d+O\left(\lambda^{d-1}\right).\end{equation}  
	This implies that the $n$-th eigenvalue satisfies  
	\begin{equation}
	\lambda_n \sim 2 \pi\left(\frac{n}{\omega_d}\right)^{1 / d} 
	\end{equation} as $n\to\infty$. 
	
	The asymptotic expansion \eqref{offdia} for the Weyl local law is actually true in the ${C}^{\infty}$-topology, and so,
	if we choose a sufficiently  small geodesic normal coordinate chart,  and multi-indices $\alpha, \beta \in \mathbb{Z}_{\geq 0}^d$,   we have the estimates  \cite{BC, G}
	\begin{equation}\label{exp}
		\partial^{\alpha}_x\partial^\beta_y K_\lambda(x, y)=\frac{\omega_d}{(2 \pi)^d} \lambda^d   \partial^{\alpha}_x\partial^\beta_y   \left[\mathcal{B}_d(\lambda \operatorname{d_g}(x, y))\right]+O\left(\lambda^{d+|\alpha|+|\beta|-1}\right)
	\end{equation}  
	if $ \operatorname{d_g}(x,y)< \eta$. 
	On diagonal, Theorem 1 in \cite{X} gives 
	\begin{equation}\label{offdev1}
		\partial_x^\alpha \partial_y^\beta K_\lambda(x, y)|_{x=y}= \begin{cases}
			C_{d, \alpha, \beta} \lambda^{d+|\alpha|+|\beta|}+{O}\left(\lambda^{d+|\alpha|+|\beta|-1}\right), & \text { if } \alpha- \beta \in 2\mathbb Z^d \\
			{O}\left(\lambda^{d+|\alpha|+|\beta|-1}\right), & \text { otherwise }
		\end{cases},
	\end{equation} 
	where, for   $\alpha, \beta$ such that $ \alpha- \beta \in 2\mathbb Z^d$,  $C_{d, \alpha, \beta}$ is defined as:
	\begin{equation} \label{onder}
		C_{d, \alpha, \beta}=(-1)^{(|\alpha|-|\beta|) / 2} \frac{\prod_{j=1}^d\left(\alpha_j+\beta_j-1\right) ! !}{\pi^{d / 2} 2^{d+\frac{1}{2}|\alpha+\beta| } \Gamma\left(\frac{|\alpha+\beta|+d}{2}+1\right)} .
	\end{equation}  
	As a special case, when $\alpha=\beta$,  we have 
	\begin{equation}
		\sum_{\lambda_n \leqslant \lambda}\left|\partial^\alpha \varphi_n(x)\right|^2=C_{d, \alpha, \alpha} \lambda^{d+2|\alpha|}+{O}\left(\lambda^{d+2|\alpha|-1}\right)
	\end{equation}  uniformly, for $\lambda $ large enough.  
	
	On the other hand,  if the pair $(x,y)$ belongs to some compact set in $M\times M$ disjoint from the diagonal, then,  for $\lambda$ large enough, a uniform upper bound holds, specifically \cite{H}
	\begin{equation}\label{offdev111} K_\lambda(x, y)=O\left(\lambda^{d-1}\right).
	\end{equation}     Additionally, for multi-indices $\alpha, \beta \in \mathbb{Z}_{\geq 0}^d$, we have \cite{X}
	\begin{equation}\label{offdev112}
		\partial^{\alpha}_x\partial^\beta_y K_\lambda(x, y)=O\left(\lambda^{d+|\alpha|+|\beta|-1}\right).
	\end{equation}  
	
	We now have what we need to start the core proofs.
	\section{Local geometry of the embedding}
	\label{embeddingproofs:sec}
	
	In this section, we prove Theorem \ref{localrad} regarding the local geometry at each point of the embedded manifolds. 	\subsection{Estimates for the projection onto tangent space}
	We start  by looking more closely at the geometry of the embedding $i_\lambda(M)$  in the ambient space $\mathbb R^{k_\lambda}$. At the point $y\in M$, by choosing a local coordinate system, the tangent space of the embedding $i_\lambda(M)$ at the image $i_\lambda(y)\in \mathbb R^{k_\lambda}$ is given by 
	$$
	\operatorname{span}_{\mathbb R}\Big\{\frac{\partial \ily}{\partial y_1},\cdots,  \frac{\partial \ily}{\partial y_d}\Big\} \subset T_{i_\lambda(y)}\mathbb R^{k_\lambda}\cong \mathbb R^{k_\lambda}.
	$$
	Given a vector $v \in  \mathbb R^{k_\lambda}$, let $p$ be the  projection of  $v$  onto this tangent space.  We first derive a uniform estimate for the norm of $p$.

	To this end, define the following $k_\lambda\times d$ matrix depending on $\lambda$ and $y$, 
	$$A= \left[ \frac{\partial \ily}{\partial y_1},\cdots,  \frac{\partial \ily}{\partial y_d}\right] .
	$$ 
	Then the projection $p$ of $v$ onto the tangent space is 
	\begin{equation}
		\label{prol}p= A(A^TA)^{-1}A^Tv.
	\end{equation} 
	Defining \begin{equation}\label{psid}\psi_\ell(x): =K_\lambda(x,x)^{-1 / 2} \varphi_\ell(x),\end{equation}  we can express the mapping $i_\lambda$ as
	\begin{equation} 
		\label{iiph}
		i_\lambda(x)=(\psi_1(x),..., \psi_{k_\lambda}(x))^T,
	\end{equation} 
	so that  we have 
	$$A^T=\Big [ \frac{\partial \psi_\ell}{\partial y_i } \Big]_{ 1\leq i \leq d, 1\leq \ell \leq k_\lambda}.$$  
	We further define the following normalized kernel:
	\begin{equation} \label{nke}P_\lambda (x,y):=\langle \ilx, \ily\rangle =\frac{K_\lambda(x,y)}{K_\lambda(x,x)^{\frac 12 }K_\lambda(y,y)^{\frac 12}} .
	\end{equation}
	Now we derive an asymptotic expansion for $\|A(A^TA)^{-1}A^Tv\|^2$, exploiting  the estimates on the spectral projection kernel from the previous section.
	
	First note that 
	$$
	A^TA = \left(  \left\langle \frac{\partial \ily}{\partial y_i},\frac{\partial \ily}{\partial y_j}  \right\rangle \right)_{1\leq i,j\leq d}=  \left(\frac{\partial ^2P_\lambda(x,y)}{\partial x_i \partial y_j}{\ |_{x=y}}\right)_{1\leq i,j\leq d}.
	$$
	By taking the derivatives, we have 
	\begin{equation}\label{twoder}
		\left.  \frac{\partial ^2P_\lambda(x,y)}{\partial x_i \partial y_j} \right|_{x=y}
		=
		\left. \left[\frac{\partial _{y_j}\partial_{x_i}K_\lambda(x,y)}{K_\lambda(y,y)} -\frac{\partial_{y_j}K_\lambda(x,y) \partial_{x_i}K_\lambda(x,y)}{K_\lambda(y,y)^2} \right] \right|_{x=y}. \end{equation}
	Now  choose a geodesic normal coordinate centered at $y$. Then, by the local Weyl law  \eqref{exp1}, \eqref{offdev1} and \eqref{onder}, we can derive the following uniform estimates:
	\begin{equation}
		\label{fird}
		\left. \frac{\partial _{y_j}\partial_{x_i}K_\lambda(x,y)}{K_\lambda(y,y)}  \right|_{x=y} \ =\ \begin{cases} \frac{\lambda^2}{d+2}+ O(\lambda)&\text{ if } i= j
			\\
			O(\lambda)&\text{ if } i\neq j
	\end{cases}\end{equation} 
	and 
	\begin{equation}
		\label{secd} 
		\left.  \frac{\partial_{y_j}K_\lambda(x,y) \partial_{x_i}K_\lambda(x,y)}{K_\lambda(y,y)^2}   
		\right|_{x=y} =O(1)
	\end{equation} 
	as $\lambda \to\infty$. 
	This implies that, 
	for $\lambda$ large enough, we have the estimate 
	\begin{equation}\label{aat}A^TA =  \frac{\lambda^2}{d+2} I_d+O(\lambda),\end{equation} 
	where $I_d$ is the $d\times d$ identity matrix and $O(\lambda)$ denotes a matrix where each entry has a uniform bound of order $O(\lambda)$. This implies the estimate 
	$$(A^TA)^{-1} = {(d+2)} \lambda^{-2} (I_d+O(\lambda^{-1})), $$
	and thus $p$,  the vector of the projection in \eqref{prol}, satisfies the estimate
	\begin{eqnarray}
		\label{normofp}
		\|p\|^2 =&  v^T A[(A^TA)^{-1}]^TA^TA(A^TA)^{-1}A^Tv\\
		=&(1+O(\lambda^{-1})){(d+2)} \lambda^{-2}\| A^Tv\|^2.  \notag
	\end{eqnarray}
	
	\subsection{Proof of Theorem \ref{localrad}} 
	We will derive uniform estimates for the numerator and denominator  in \eqref{critical} separately (with $M$ replaced by $i_{\lambda}(M)$), and denote them by $\mathcal{N}_{\lambda}(x,y)$ and $\mathcal{D}_{\lambda}(x,y)$ (or simply $\mathcal N$ and $\mathcal D$, for brevity) as follows,
	\begin{equation}
		\label{ratio}
		\frac{\|\ilx-\ily\|^2}{2\left\|\mathcal{P}_y^{\perp}(\ilx-\ily)\right\|}=:\frac{\mathcal N_{\lambda}(x,y)}{\mathcal D_{\lambda}(x,y)}=:\frac{\mathcal N}{\mathcal D}.
	\end{equation}   
	For any fixed \(y \in M\), we choose geodesic normal coordinates around \(y\) such that \(y = 0\), and we identify \(x\) in its normal neighborhood with its coordinates \((x_1, \ldots, x_d)\). 
	A simple Taylor expansion around a normal neighborhood of $y=0$  gives 
	\begin{equation}
		\begin{split}
			\label{taylor1}
			i_{\lambda}(x)-i_{\lambda}(y)&= A\begin{bmatrix} x_1\\ \vdots\\ x_d \end{bmatrix}+ \frac 12  \begin{bmatrix} \sum_{i,j}\frac{\partial^2\psi_1(0) }{\partial x_i \partial x_j}x_ix_j  \\ \vdots\\  \sum_{i,j}\frac{\partial^2\psi_{k_\lambda}(0) }{\partial x_i \partial x_j}x_ix_j  \end{bmatrix} +e_\lambda(x),\\
			&=:Ax^T+\frac{1}{2}\mathcal H_{ }(x)+ e_{\lambda}(x), 
		\end{split} 
	\end{equation} 
	where the error term has the integral representation, 
	$$e_\lambda(x)=\left[ x^\alpha \int_0^1 \frac{(1-t)^k}{k!} \sum_{|\alpha|=3} \frac{1}{\alpha!}\partial^\alpha \psi_i(tx) dt\right]_{1\leq i\leq k_\lambda}, $$
	where $\alpha=(\alpha_1,...,\alpha_d)$, $x^{\alpha}=\Pi_{i=1}^d x_i^{\alpha_i}$, $|\alpha|=\alpha_1+\cdots+\alpha_d$ and $\alpha!=\alpha_1\cdots\alpha_d!$. 
	\begin{lem}\label{lem:mainlem}
		The following estimates holds for $\operatorname{d_g}(x,y) \leq (\lambda \log \lambda)^{-1}$,
		\begin{align}
			\norm{A x^T}^2&=\left(  \frac{\lambda^2}{d+2} +O(\lambda)\right)\sum_{i=1}^d x_i^2,\label{axnorm}\\
			\|\mathcal H_{ }(x) \|^2&= \lambda^4  \frac{3\Gamma(\frac d 2+1)}{4\Gamma(\frac d 2+3)}\left(\sum_{i=1}^d x_i^2\right)^2 +O(\lambda^3)\sum_{i=1}^d x_i^4,\label{hnorm}\\\label{113}\|e_\lambda(x)\|&=O(\lambda^ 3) \sum_{i=1}^d |x_i|^3,\\
			\|\mathcal{P}_y \mathcal H_{ }(x) \|^2&= O(\lambda^2)\sum_{i=1}^d x_i^4\label{prods1}.
		\end{align}
	\end{lem}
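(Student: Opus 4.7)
The plan is to express each of the four quantities as a contraction of mixed derivatives of the normalized kernel
\[
P_\lambda(u,v)\ =\ \langle i_\lambda(u), i_\lambda(v)\rangle\ =\ \sum_\ell \psi_\ell(u) \psi_\ell(v)
\]
evaluated at the diagonal, and then to invoke the $C^\infty$ local Weyl law \eqref{exp} to replace these derivatives by derivatives of $\mathcal{B}_d(\lambda \operatorname{d_g}(u,v))$ modulo a remainder of order $\lambda^{|\alpha|+|\beta|-1}$. Working in geodesic normal coordinates centered at $y=0$, the Riemannian distance satisfies $\operatorname{d_g}(u,v)^2 = |u-v|^2 + O\bigl(|u-v|^2(|u|+|v|)^2\bigr)$, so the leading $\lambda$-power of any such mixed derivative is determined by the Taylor expansion
\[
\mathcal{B}_d(\lambda r)\ =\ 1 - \tfrac{\lambda^2 r^2}{2(d+2)} + \tfrac{\lambda^4 r^4}{8(d+2)(d+4)} - \cdots
\]
applied with $r=|u-v|$, while the Riemannian correction and the Weyl remainder contribute only to lower-order terms.

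With this in hand, \eqref{axnorm} is immediate from $\|A x^T\|^2 = x^T(A^T A)x$ together with the matrix expansion \eqref{aat}, whose off-diagonal $O(\lambda)$ entries contribute at most $O(\lambda)\sum x_i^2$. For \eqref{hnorm} I would write
\[
\|\mathcal{H}(x)\|^2\ =\ \sum_{i,j,k,l} x_i x_j x_k x_l\, \partial^2_{u_i u_j}\partial^2_{v_k v_l} P_\lambda(u,v)\Big|_{u=v=0},
\]
and compute the relevant fourth derivative from the Taylor coefficient $\lambda^4/(8(d+2)(d+4))$ of $|u-v|^4$; direct differentiation gives
\[
\partial^2_{u_i u_j}\partial^2_{v_k v_l}\, |u-v|^4\Big|_{u=v=0}\ =\ 8(\delta_{ij}\delta_{kl}+\delta_{ik}\delta_{jl}+\delta_{il}\delta_{jk}),
\]
which after contraction with $x_i x_j x_k x_l$ yields three copies of $(\sum x_i^2)^2$ and so the stated coefficient $\frac{3\Gamma(d/2+1)}{4\Gamma(d/2+3)}\lambda^4(\sum x_i^2)^2$. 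The corrections from $\operatorname{d_g}$ versus $|u-v|$ and from the Weyl remainder are $O(\lambda^3)$ per coefficient, which absorbs into $O(\lambda^3)\sum x_i^4$ after symmetrization (using $(\sum x_i^2)^2 \leq d\sum x_i^4$).

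For \eqref{113}, the integral form of the Taylor remainder gives
\[
\|e_\lambda(x)\|\ \leq\ C\sum_{|\alpha|=3}|x^\alpha|\int_0^1 \Bigl(\sum_\ell |\partial^\alpha\psi_\ell(tx)|^2\Bigr)^{1/2} dt,
\]
and the inner sum equals $\partial^\alpha_u\partial^\alpha_v P_\lambda(u,v)|_{u=v=tx}$; by the diagonal law \eqref{offdev1} this is $O(\lambda^{2|\alpha|})=O(\lambda^6)$ uniformly on the small neighborhood $|x|\leq(\lambda\log\lambda)^{-1}$, yielding the bound. For \eqref{prods1} I would apply the projection identity \eqref{normofp},
\[
\|\mathcal{P}_y \mathcal{H}(x)\|^2\ =\ (1+O(\lambda^{-1}))\tfrac{d+2}{\lambda^2}\|A^T\mathcal{H}(x)\|^2,
\]
so that the task reduces to estimating the third mixed derivative $\partial_{u_m}\partial^2_{v_i v_j} P_\lambda|_{u=v=0}$, via $(A^T\mathcal{H}(x))_m = \sum_{i,j} x_i x_j\, \partial_{u_m}\partial^2_{v_i v_j} P_\lambda|_{u=v=0}$.

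The main technical obstacle is the sharp bound on this last derivative: a naive estimate would give $O(\lambda^3)$ and produce only $\|\mathcal{P}_y\mathcal{H}(x)\|^2 = O(\lambda^4)(\sum x_i^2)^2$, of the same order as $\|\mathcal{H}(x)\|^2$ itself, which would be useless for the application in Theorem \ref{localrad}. The needed cancellation comes from the parity constraint in \eqref{offdev1}/\eqref{onder}: for $\alpha=e_m$ and $\beta=e_i+e_j$ the difference $\alpha-\beta$ has odd total, hence never lies in $2\mathbb{Z}^d$, so $\partial^\alpha_x\partial^\beta_y K_\lambda|_{x=y} = O(\lambda^{d+2})$ rather than $O(\lambda^{d+3})$. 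A routine Leibniz expansion of $P_\lambda = K_\lambda(u,v)\,K_\lambda(u,u)^{-1/2}K_\lambda(v,v)^{-1/2}$ shows that each term produced carries an odd-order diagonal derivative of $K_\lambda$ and is therefore individually $O(\lambda^2)$ after normalization, yielding the sharp bound
\[
\partial_{u_m}\partial^2_{v_i v_j} P_\lambda(u,v)\Big|_{u=v=0}\ =\ O(\lambda^2).
\]
Geometrically this reflects the classical fact that the second fundamental form of an embedding is normal to the tangent space at leading order. Inserting this gives $(A^T\mathcal{H}(x))_m = O(\lambda^2)|x|^2$, hence $\|A^T\mathcal{H}(x)\|^2 = O(\lambda^4)|x|^4$, and finally $\|\mathcal{P}_y \mathcal{H}(x)\|^2 = O(\lambda^2)\sum x_i^4$ as required.
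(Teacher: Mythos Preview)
Your proposal is correct and follows essentially the same route as the paper: each of the four quantities is expressed via mixed diagonal derivatives of the normalized kernel $P_\lambda$, the projection identity \eqref{normofp} handles \eqref{prods1}, and the key $O(\lambda^2)$ bound for $\partial_{u_m}\partial^2_{v_iv_j}P_\lambda|_{u=v}$ comes from the parity clause of \eqref{offdev1}/\eqref{plambda}, exactly as in the paper. The one cosmetic difference is in the computation of the leading coefficient of $\|\mathcal H(x)\|^2$: the paper evaluates $\partial^2_{x_ix_j}\partial^2_{y_ky_l}P_\lambda|_{x=y}$ via the explicit constants $\tilde C_{d,\alpha,\beta}$ and a case-by-case count, whereas you obtain it by differentiating $|u-v|^4$ and multiplying by the quartic Taylor coefficient $\lambda^4/(8(d+2)(d+4))$ of $\mathcal B_d$; both give $\frac{3}{(d+2)(d+4)}=\frac{3\Gamma(d/2+1)}{4\Gamma(d/2+3)}$, and your route is arguably cleaner.
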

	\begin{proof}  By \eqref{aat}, the squared norm of $Ax^T$ has the uniform estimate
		$$\begin{bmatrix} x_1& \cdots & x_d \end{bmatrix} A^TA
		{\begin{bmatrix} x_1& \cdots & x_d \end{bmatrix}}^T=\left(  \frac{\lambda^2}{d+2} +O(\lambda)\right)\sum_{i=1}^d x_i^2.$$
		This proves \eqref{axnorm}. 
		For \eqref{hnorm}, we 
		express the squared norm of $\mathcal{H}_{ }(x)$ in terms of the kernel as,
		\begin{align*}
			\|\mathcal H(x) \|^2& =\sum_{\ell=1}^{k_{\lambda}} \left( \sum_{i,j=1}^d\frac{\partial^2\psi_\ell(0) }{\partial x_i \partial x_j}x_ix_j \right)^2\\ &=\sum_{i,j, k,q=1}^d \left(\sum_{\ell=1}^{k_{\lambda}} \frac{\partial^2\psi_\ell(0) }{\partial x_i \partial x_j}\frac{\partial^2\psi_\ell(0) }{\partial x_k \partial x_q}\right)x_ix_jx_kx_q \\&=  \sum_{i,j, k, q =1}^d  \partial^2_{x_i x_j}\partial^2_{ y_k y_q} P_{\lambda}(x,y)|_{x=y=0}x_ix_j x_k x_q. 
		\end{align*} 
		Recall  \eqref{offdev1} and \eqref{onder} regarding the local Weyl law on diagonal, we have 
		\begin{equation} \label{plambda}
			\partial_x^\alpha \partial_y^\beta P_\lambda(x,y) |_{x=y}=\begin{cases}
				\tilde C_{d, \alpha, \beta} \lambda^{|\alpha+\beta|}+{O}\left(\lambda^{|\alpha+\beta|-1}\right), & \text { if } \alpha - \beta \in 2\mathbb Z^d; \\
				{O}\left(\lambda^{|\alpha+\beta|-1}\right), &\text { otherwise, }
			\end{cases}
		\end{equation} 
		where the constant 
		\begin{equation} 
			\tilde C_{d, \alpha, \beta}=(-1)^{(|\alpha|-|\beta|) / 2}\prod_{j=1}^d\left(\alpha_j+\beta_j-1\right) ! ! \frac{ \Gamma\left(\frac{d}2+1\right)}{ 2^{|\alpha+\beta| / 2} \Gamma\left(\frac{|\alpha+\beta|+d}{2}+1\right)}.
		\end{equation}  
		
		We obtain the uniform estimate for $\|\mathcal H_{ }(x) \|^2$ as follows. The fourth order derivative $ \partial^2_{x_i x_j}\partial^2_{ y_k y_l} P_{\lambda}(x,y)|_{x=y}$ is  $\tilde C_{d, \alpha,\beta}\lambda^{4}+O(\lambda^3)$ if $ \alpha-\beta\in 2\mathbb Z^d$; otherwise it is $O(\lambda^3)$. Consider derivatives with respect to the variables $x_1$ and $x_2$. 
		If $ \alpha-\beta\in 2\mathbb Z^d$, there are three different types of choices for $ \alpha$ and $\beta$.  The first choice is $\alpha=(1,1, 0,..., 0)$, $\beta=(1,1, 0,.., 0)$, including $\partial_{x_1}\partial_{x_2}\partial_{y_1}\partial_{y_2}$,  $\partial_{x_1}\partial_{x_2}\partial_{y_2}\partial_{y_1}$,  $\partial_{x_2}\partial_{x_1}\partial_{y_1}\partial_{y_ 2}$, $ \partial_{x_2}\partial_{x_1}\partial_{y_2}\partial_{y_ 1}$;  the second choice is $\alpha=(2,0,0 ,...,0)$ and $\beta=(0,2, 0,...,0)$, including $\partial^2_{x_1}\partial^2_{y_2}$ and $\partial^2_{x_2}\partial^2_{y_1}$.  Each of these choices gives a  term $ x_1^2x_2^2$ with the coefficient 
		$\Gamma(1+d/2)/(4{\Gamma(3+d/2)})$, and the total summation yields, $$\left(\frac 6 4 \frac{\Gamma(\frac d 2+1)}{\Gamma(\frac d 2+3)}\lambda^4 +O(\lambda^3)\right)x_1^2x_2^2.$$
		The last choice is $\alpha=(2,0, 0,.., 0)$ and $\beta=(2,0, 0,.., 0)$, which corresponds to  the term $x_1^4$ with  the coefficient 
		$3{\Gamma(1+d/2)}/4{\Gamma(3+d/2)}$, i.e., 
		$$\left(\frac 3 4 \frac{\Gamma(\frac d 2+1)}{\Gamma(\frac d 2+3)}\lambda^4 +O(\lambda^3)\right)x_1^4.$$
		The above arguments apply to other pairs of derivatives.
		Summing over all derivatives gives the desired estimate \eqref{hnorm}.
		
		The error control \eqref{113} for $\norm{e_{\lambda}(x)}$ can be derived in a similar fashion to \eqref{hnorm}, and we omit the details.
		
		It remains to prove \eqref{prods1}. 	Analogue to   \eqref{prol} and \eqref{normofp},  we have
		\begin{align*} \|\mathcal{P}_y  \mathcal H(x)\|^2=&\|A(A^TA)^{-1}A^T\mathcal H(x) \|^2
			=(1+O(\lambda^{-1})){(d+2)} \lambda^{-2}\| A^T\mathcal H(x)\|^2\\
			=& (1+O(\lambda^{-1})) (d+2) \lambda^{-2}  \left\|\begin{bmatrix} \sum_{i,j=1}^d \partial^2_{ x_i  x_j} \partial_{y_1} P_\lambda(x,y)|_{x=y=0}  x_ix_j  \\ \vdots\\ \sum_{i,j=1}^d \partial^2_{ x_i  x_j} \partial_{y_d} P_\lambda(x,y)|_{x=y=0}  x_ix_j \end{bmatrix}\right\|^2,\end{align*}
		which  proves \eqref{prods1} since 
		$ \partial^2_{ x_i  x_j} \partial_{y_k}P_\lambda(x,y)|_{x=y}=O(\lambda^{ 2})$ by 	\eqref{plambda}. This completes the proof of Lemma \ref{lem:mainlem}. 
		
	\end{proof} 
	
	We now return to the proof of Theorem \ref{localrad}. First consider the numerator $\mathcal N$ in \eqref{ratio}. Using Lemma \ref{lem:mainlem} and the triangle inequality, we have
	$$
	\norm{Ax^T} -\frac{1}{2}\norm{\mathcal{H}(x) }-\norm{e_{\lambda}(x)}\leq 
	\norm{i_{\lambda}(x)-i_{\lambda}(y)}\leq 
	\norm{Ax^T} +\frac{1}{2}\norm{\mathcal{H}(x) }+\norm{e_{\lambda}(x)}.
	$$
	And thus, for $\operatorname{d_g} (x,y) \leq (\lambda \log \lambda)^{-1}$, we get 
	\begin{equation}\label{nuest}
		\mathcal N =\norm{Ax^T}^2=\left(1+O(\log^{-1}\lambda)\right)	\frac{\lambda^2}{d+2} \sum_{i=1}^d x_i^2.
	\end{equation}

	Next, we consider the denominator $\mathcal D$.  Recall the Taylor expansion \eqref{taylor1}. Using the fact
	$Ax^T 
	\in T_{i_\lambda(y)}M$,
	the projection of $i_{\lambda}(x)-i_{\lambda}(y)$   to the normal bundle satisfies 
	\begin{equation*}
		\mathcal{P}_y^{\perp} (i_{\lambda}(x)-i_{\lambda}(y)) =\frac 12  \mathcal{P}_y^{\perp}  \mathcal H(x)+\mathcal{P}_y^{\perp} e_\lambda=\frac{1}{2}\left(\mathcal H(x)-\mathcal{P}_y \mathcal{H}_{\lambda}(x)\right)+\mathcal{P}_y^{\perp}e_{\lambda}(x).
	\end{equation*}
	By Lemma \ref{lem:mainlem}, the triangle inequality and the fact 
	$\norm{\mathcal{P}_y^{\perp}e_{\lambda}(x)}\leq \norm{e_{\lambda}(x)}$, we get
	\begin{equation}\label{111}
		\begin{split}
			\|\mathcal{P}_y^{\perp} (i_{\lambda}(x)-i_{\lambda}(y)) -\frac 12    \mathcal H(x)\| &\leq  \frac{1}{2}\norm{\mathcal{P}_y \mathcal{H}_{}(x)}+
			\|\mathcal{P}_y^{\perp}e_\lambda\| \\
			& =O(\lambda)\sum_{i=1}^d x_i^2 +
			O(\lambda^3)\sum_{i=1}^d x_i^3.
		\end{split}
	\end{equation} 
	Combining this with \eqref{hnorm} for $\norm{\mathcal H_{ }(x)}$, we have that, for $\operatorname{d_g} (x,y) \leq (\lambda \log \lambda)^{-1}$,
	\begin{equation}\label{deest}
		\mathcal D= 2	\|\mathcal{P}_y^{\perp} (i_{\lambda}(x)-i_{\lambda}(y))	\|= \left(1+O\left(\log^{-1}\lambda\right)\right) \sqrt{\frac{3\Gamma(\frac d 2+1)}{4\Gamma(\frac d 2+3)}} \lambda^2
		\sum_{i=1}^d x_i^2. 
	\end{equation}
	Therefore, the estimates  \eqref{nuest}, \eqref{deest} and \eqref{ratio} imply that 
	$$\lim_{\lambda\to\infty} \sup_{\operatorname{d_g}(x, y)\leq (\lambda \log \lambda)^{-1}} \left|\frac{\mathcal N_\lambda(x,y)}{\mathcal D_\lambda(x,y)}- \sqrt{\frac{d+4}{3(d+2)}}\right|=0,$$
	which gives \eqref{them21}.  Furthermore, the above limit, together with \eqref{hxy} and \eqref{lim32} will imply that the largest absolute value of the principal curvature is asymptotic to $ \sqrt{\frac{3(d+2)}{d+4}}$. Now we complete the proof of Theorem \ref{localrad}.

	\section{Critical radius}\label{proofoftheorem1}
	In this section, we prove Theorem \ref{main1}.
	
	\subsection{Kernel representation of the critical radius}
	Treating $i_\lambda(M)$ as a submanifold of $\mathbb R^{k_\lambda}$,  we first express its critical radius in terms of the spectral projection kernel. Recall the definitions of $\mathcal N_{\lambda}(x,y)$ and $\mathcal D_{\lambda}(x,y)$ in \eqref{ratio}, we have:  
	\begin{prop}\label{prop:preradius}
		We have the expression  $$\mathcal N_{\lambda}(x,y)=2(1-P_{\lambda}(x,y)),$$ and the following uniform  estimate over $x,y\in M$,

		\begin{equation}\label{dfor}
			\mathcal D_{\lambda}(x,y)=2\sqrt{ 2-2P_\lambda(x,y) - {(d+2)}\lambda^{-2}\sum_{i=1}^d\Big ( \partial_{y_i} P_\lambda(x,y)   \Big)^2+ O(\lambda^{-1} ) }.		\end{equation} 	\end{prop}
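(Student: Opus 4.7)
The first identity is essentially bookkeeping. Since $\|i_\lambda(x)\|=\|i_\lambda(y)\|=1$, I would expand
\[
\mathcal N_\lambda(x,y)=\|i_\lambda(x)-i_\lambda(y)\|^2=2-2\langle i_\lambda(x), i_\lambda(y)\rangle=2(1-P_\lambda(x,y)),
\]
using the definition \eqref{nke} of $P_\lambda$. No extra analytic input is needed for this half.

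\textbf{The core idea for $\mathcal D_\lambda$.}
Setting $v:=i_\lambda(x)-i_\lambda(y)$ and using orthogonality of the tangential and normal projections,
\[
\|\mathcal P_y^\perp v\|^2=\|v\|^2-\|\mathcal P_y v\|^2=2(1-P_\lambda(x,y))-\|\mathcal P_y v\|^2.
\]
The plan is to rewrite the tangential piece $\|\mathcal P_y v\|^2$ entirely in terms of first derivatives of $P_\lambda$. Using the projection formula $\mathcal P_y v=A(A^TA)^{-1}A^T v$ with the matrix $A=[\partial_{y_1}i_\lambda(y),\ldots,\partial_{y_d}i_\lambda(y)]$ from the preceding subsection and the estimate \eqref{normofp} already established there, one has
\[
\|\mathcal P_y v\|^2=(1+O(\lambda^{-1}))\,(d+2)\lambda^{-2}\|A^Tv\|^2.
\]
The key observation is then to identify the components of $A^Tv$ with partial derivatives of $P_\lambda$: since $\|i_\lambda(y)\|^2\equiv 1$, differentiating gives $\langle i_\lambda(y),\partial_{y_i}i_\lambda(y)\rangle=0$, so
\[
(A^Tv)_i=\langle\partial_{y_i}i_\lambda(y),i_\lambda(x)\rangle-\langle\partial_{y_i}i_\lambda(y),i_\lambda(y)\rangle=\partial_{y_i}\langle i_\lambda(x),i_\lambda(y)\rangle=\partial_{y_i}P_\lambda(x,y).
\]
Substituting back yields the main term $-(d+2)\lambda^{-2}\sum_i(\partial_{y_i}P_\lambda)^2$ inside the square root of \eqref{dfor}.

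\textbf{Uniform error control.}
The last step is to absorb the prefactor $(1+O(\lambda^{-1}))$ into an $O(\lambda^{-1})$ correction term that is uniform in $(x,y)$. For this I need a uniform bound $\partial_{y_i}P_\lambda(x,y)=O(\lambda)$. This follows by applying the quotient rule to $P_\lambda=K_\lambda/\sqrt{K_\lambda(x,x)K_\lambda(y,y)}$ and invoking the local Weyl estimates of Section \ref{cr}: the diagonal asymptotics \eqref{exp1} give $K_\lambda(x,x)\asymp\lambda^d$, the $C^\infty$ expansion \eqref{exp} (near the diagonal) together with the off-diagonal bound \eqref{offdev112} (away from it) give $\partial_{y_i}K_\lambda(x,y)=O(\lambda^{d+1})$, and likewise $K_\lambda(x,y)=O(\lambda^d)$. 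Combining these yields $\partial_{y_i}P_\lambda=O(\lambda)$ uniformly, so
\[
O(\lambda^{-1})\cdot(d+2)\lambda^{-2}\sum_i(\partial_{y_i}P_\lambda(x,y))^2=O(\lambda^{-1})
\]
uniformly on $M\times M$. Taking the square root and multiplying by $2$ produces exactly \eqref{dfor}.

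\textbf{Expected obstacle.}
The only delicate point is ensuring the $O(\lambda^{-1})$ remainder is genuinely \emph{uniform} across the whole of $M\times M$, including pairs $(x,y)$ far from the diagonal. This amounts to splitting into the near-diagonal regime (where the $C^\infty$ local Weyl expansion \eqref{exp} applies) and the compact regime bounded away from the diagonal (where \eqref{offdev111}--\eqref{offdev112} apply), and patching the two bounds. Everything else in the proof is straightforward linear algebra combined with the identity $\|i_\lambda\|\equiv1$.
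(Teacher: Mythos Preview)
Your proposal is correct and follows essentially the same route as the paper: expand the numerator via $\|i_\lambda\|=1$, write $\|\mathcal P_y^\perp v\|^2=\|v\|^2-\|\mathcal P_y v\|^2$, invoke \eqref{normofp} to reduce $\|\mathcal P_y v\|^2$ to $\|A^Tv\|^2$, identify $(A^Tv)_i=\partial_{y_i}P_\lambda(x,y)$, and then use the uniform bound $\partial_{y_i}P_\lambda=O(\lambda)$ from the near-diagonal expansion \eqref{exp} and the off-diagonal bound \eqref{offdev112} to absorb the multiplicative $(1+O(\lambda^{-1}))$ into an additive $O(\lambda^{-1})$. The only cosmetic difference is that the paper justifies the vanishing of $\langle\partial_{y_i}i_\lambda(y),i_\lambda(y)\rangle$ by noting that $P_\lambda(x,y)$ attains its maximum $1$ on the diagonal, whereas you differentiate $\|i_\lambda(y)\|^2\equiv1$; these are the same fact.
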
 
	\begin{proof}
		From \eqref{map} and  \eqref{nke}, we first have \begin{equation}\label{rc2}
			\begin{split} 
				\mathcal N_{\lambda}(x,y)&=\| i_\lambda(x)-i_\lambda(y)\|^2\\
				&=\langle i_\lambda(x), i_\lambda(x)\rangle-2 \langle i_\lambda(x), i_\lambda(y)\rangle+\langle i_\lambda(y), i_\lambda(y)\rangle\\
				& = 2-2P_\lambda(x,y). 
		\end{split}\end{equation}  
		To compute $\mathcal D_{\lambda}(x,y)$, we first estimate the norm of the projection $\norm{\mathcal{P}_y (i_\lambda(x)-i_\lambda(y))} $.
		By \eqref{prol} and \eqref{normofp}, we get
		\begin{equation}\label{Pybd0}
			\begin{split}
				\norm{\mathcal{P}_y (i_\lambda(x)-i_\lambda(y))}^2=&
				\|A(A^TA)^{-1}A^T(\ilx-\ily)\|^2\\ =&(1+O(\lambda^{-1})){(d+2)} \lambda^{-2}\| A^T(\ilx-\ily)\|^2.
			\end{split}
		\end{equation}
		We claim that
		\begin{equation}\label{Atix}
			A^T(\ilx-\ily)=\Big [\partial_{y_i} P_\lambda(x,y)  \Big]_{1\leq i\leq d}.
		\end{equation}
		Assume \eqref{Atix} for the moment. 
		By  \eqref{exp} and \eqref{offdev112}, we  have the following uniform estimates for all $x,y\in M$, 
		\begin{equation}\label{unifdev}
			\partial_{y_i} P_\lambda(x,y)=O(\lambda),\quad \forall\, i=1,.., d.
		\end{equation}
		Equation \eqref{dfor} now follows by combining \eqref{Pybd0}, \eqref{Atix} and and \eqref{unifdev}.

		It remains to prove \eqref{Atix}. 
		Note that
		\begin{equation}\label{At2}
			A^T(\ilx-\ily)=\Big [ \sum_{\ell =1}^{k_\lambda} \frac{\partial \psi_\ell(y)}{\partial y_i}(\psi_\ell(x)-\psi_\ell(y)) \Big]_{1\leq i\leq d}.
		\end{equation}	
		The first part in the right hand of \eqref{At2} satisfies 
		\begin{equation}\label{p1}
			\sum_{\ell =1}^{k_\lambda} \frac{\partial \psi_\ell(y)}{\partial y_i}\psi_\ell(x)=\partial_{y_i} \left(\sum_{\ell =1}^{k_\lambda} \psi_\ell(y)\psi_\ell(x)\right)=\partial_{y_i} P_\lambda(x,y).
		\end{equation}
		Similarly,  
		\begin{equation}\label{p2}
			\sum_{\ell =1}^{k_\lambda} \frac{\partial \psi_\ell(y)}{\partial y_i}\psi_\ell(y)=\partial_{y_i} P_\lambda(x,y)  {\Big |_{x=y}},
		\end{equation} 
		which must be 0 since $P_{\lambda}(x,y)$ attains its maximum 1 for $x=y$. The claim \eqref{Atix} can be now read off from \eqref{At2}-\eqref{p2}.

	\end{proof} 
	
	\subsection{Proof of Theorem \ref{main1}}
	Now we are ready to prove Theorem \ref{main1}.  	Since the Riemannian manifold $(M, g)$ is smooth, compact, and without boundary, it has a positive injectivity radius \cite{Lee}. This means there exists a uniform constant $\eta > 0$, sufficiently small, such that a geodesic normal coordinate system can always be established around any point on $M$ within a geodesic distance of $\eta$.

	We now divide the analysis into four cases.
	
	{\bf Case	\textcircled{1}}:  $\operatorname{d_g}(x,y)> \eta$. By the uniform  off-diagonal estimates \eqref{offdev111} and \eqref{offdev112},   we have $P_\lambda\to0$ and $\norm{\lambda^{-1}\partial_{y} P_\lambda}\to 0$ as $\lambda\to\infty$ uniformly  over $\operatorname{d_g}(x,y)> \eta$. By Proposition \ref{prop:preradius}, we have
	\begin{equation} 
		\label{firstl}
		\lim_{\lambda\to\infty} \inf_{\operatorname{d_g}(x,y)> \eta} \frac{\mathcal N_{\lambda}(x,y)}{\mathcal D_{\lambda}(x,y)}  =\frac1{\sqrt 2}.
	\end{equation}

	{\bf Case	\textcircled{2}}: $\operatorname{d_g}(x,y)\leq (\lambda\log \lambda)^{-1}$. This case has been studied in Theorem \ref{localrad}, 
	\begin{equation}\label{inf2}
		\lim_{\lambda\to\infty} \inf_{\operatorname{d_g}(x,y)\leq (\lambda\log\lambda)^{-1}}   \frac{\mathcal N_{\lambda}(x,y)}{\mathcal D_{\lambda}(x,y)} =\sqrt{\frac{d+4}{3(d+2)}}.
	\end{equation}
	
	To analyze the behavior $\mathcal N_{\lambda}(x,y)$ and $\mathcal D_{\lambda}(x,y)$ for $(\lambda\log \lambda)^{-1}<\operatorname{d_g}(x,y)\leq \eta$, we note that by the local Weyl law  \eqref{offdia}, \eqref{exp} and the expression \eqref{bfunction}, we have the following uniform estimates for  $\operatorname{d_g}(x,y) \leq \eta$:
	$$P_\lambda(x,y)=\Gamma(\frac d 2+1)\left(\frac2{\lambda \|y\|}\right)^{d/2}J_{\frac d2}(\lambda \operatorname{d_g}(x,y)  )+O(\lambda^{-1}),$$
	and 
	$$\lambda^{-2}\sum_{i=1}^d\Big ( \partial_{y_i} P_\lambda(x,y)   \Big)^2= \Gamma\left(\frac d 2+1\right)^2\Big[\left(\left(\frac2{z}\right)^{d/2}J_{\frac d2}(z)\right)'\Big]^2\Big|_{
		z=\lambda \operatorname{d_g}(x,y)  }+O(\lambda^{-1}).$$
	For convenience, define two functions
	\begin{equation}\label{defd1d2}
		\begin{split}
			\Delta_1(u)=&1-\Gamma\left(\frac d 2+1\right)\left(\frac2{u}\right)^{\frac d2}J_{\frac d2}(u),\\
			\Delta_2(u)=&2-2\Gamma\left(\frac d 2+1\right)\left(\frac2{u}\right)^{\frac d2}J_{\frac d2}(u) \\ &- (d+2) \Gamma\left(\frac d 2+1\right)^2\Big[\left(\left(\frac2{u}\right)^{\frac d2}J_{\frac d2}(u)\right)'\Big]^2.
		\end{split}
	\end{equation}
	Thus, by Proposition \ref{prop:preradius}, for $(\lambda\log \lambda)^{-1}<\operatorname{d_g}(x,y)\leq \eta$, we have
	\begin{equation}
		\mathcal N_{\lambda}(x,y)=2\Delta_1(\lambda \operatorname{d_g}(x,y) ) +O(\lambda^{-1}),
	\end{equation}
	and
	\begin{equation}
		\mathcal D_{\lambda}(x,y)= 2
		\sqrt{\Delta_2(\lambda \operatorname{d_g}(x,y) )+O(\lambda^{-1})}.
	\end{equation}
	Recall the series expansion of $J_{d/2}$ in \eqref{jfunction}, we can pick three small positive constants $c_1, c_2, c_3$ such that
	\begin{equation}\label{dec1}
		\Delta_1(u)\geq c_3u^2 \mbox{ and } \Delta_2(u)\geq c_3 u^4, \,\,\forall\, 0\leq u\leq c_2, 
	\end{equation}
	and (recall \eqref{bels}), 
	\begin{equation}\label{dec2}
		\Delta_1(u)\geq c_1, \,\, \forall\, u\geq c_2. 
	\end{equation} 
	We now divide the regime $(\lambda \log \lambda)^{-1} <
	\operatorname{d_g}(x,y) \leq \eta$ into two cases.

	{\bf Case	\textcircled{3}}: $ (\lambda\log \lambda)^{-1}< \operatorname{d_g}(x,y)\leq c_2\lambda^{-1}$. 
	By \eqref{dec1}, in this case  we have
	$$
	\Delta_1(\lambda \operatorname{d_g}(x,y) ) \geq c_3\log^{-2}\lambda \mbox{ and } 
	\Delta_2(\lambda \operatorname{d_g}(x,y) ) \geq c_3\log^{-4}\lambda. 
	$$
	Thus,
	\begin{equation}\label{nlas}
		\mathcal N_{\lambda}(x,y)=2(1+O(\lambda^{-1}\log^2\lambda ))\Delta_1(\lambda \operatorname{d_g}(x,y) ),
	\end{equation}
	and
	\begin{equation}\label{dlas}
		\mathcal D_{\lambda}(x,y)=2(1+O(\lambda^{-1}\log^4\lambda )) \sqrt{\Delta_2(\lambda \operatorname{d_g}(x,y) )}.
	\end{equation}
	Consequently,
	\begin{equation}\label{inf3}
		\lim_{\lambda\to\infty} \inf_{(\lambda\log\lambda)^{-1}< \operatorname{d_g}(x,y)\leq c_2\lambda^{-1} }   \frac{\mathcal N_{\lambda}(x,y)}{\mathcal D_{\lambda}(x,y)} =\inf_{0< u \leq c_2} \frac{\Delta_1(u)}{\sqrt{\Delta_2(u)}}.
	\end{equation}

	{\bf Case	\textcircled{4}}: $ c_2\lambda^{-1}< \operatorname{d_g}(x,y)\leq \eta$. 
	We partition $[c_2,\infty)$ into two sets
	$$
	\mathcal A_1:=\{u\geq c_2: \Delta_2(u)<c_1^2\}, \quad \mathcal A_2:=\{u\geq c_2: \Delta_2(u)\geq c_1^2\}.
	$$
	(It is possible that either one is empty.)
	If $\lambda \operatorname{d_g}(x,y)\ \in \mathcal A_1$, then by \eqref{dec2},
	for $\lambda$ large, we have
	$$\frac{\mathcal N_{\lambda}(x,y)}{\mathcal D_{\lambda}(x,y)}\geq \frac{1}{\sqrt{2}}.$$
	
	On the other hand, if  $\lambda \operatorname{d_g}(x,y)\ \in \mathcal A_2$, then \eqref{nlas} and \eqref{dlas} will be valid again. Therefore, we get
	\begin{equation}\label{inf4}
		\begin{split} 
			&\lim_{\lambda\to\infty} \inf_{ c_2\lambda^{-1}< \operatorname{d_g}(x,y)\leq \eta } \min\left\{\frac{\mathcal N_{\lambda}(x,y)}{\mathcal D_{\lambda}(x,y)}, \frac{1}{\sqrt{2}}\right\} \\
			=&\lim_{\lambda\to\infty} \inf_{ c_2\lambda^{-1}< \operatorname{d_g}(x,y)\leq \eta,\lambda \operatorname{d_g}(x,y)\in \mathcal A_2} \min\left\{\frac{\mathcal N_{\lambda}(x,y)}{\mathcal D_{\lambda}(x,y)}, \frac{1}{\sqrt{2}}\right\}\\
			=&	\min\left\{   \inf_{ u \in \mathcal A_2} \frac{\Delta_1(u)}{\sqrt{\Delta_2(u)}},\frac{1}{\sqrt{2}} \right\}\\
			=&	   \inf_{ u \geq c_2} \frac{\Delta_1(u)}{\sqrt{\Delta_2(u)}},
		\end{split}
	\end{equation}
	where  we have used the facts that $\Delta_1(u)/\sqrt{\Delta_2(u)}\geq 1/\sqrt{2}$ for $u\in \mathcal A_1$ and that $\Delta_1(u)/\sqrt{\Delta_2(u)} \to 1/\sqrt{2}$ as $u\to\infty$ in the last equality
	
	Theorem \ref{main1} now follows by 
	combining \eqref{firstl}, \eqref{inf2}, \eqref{inf3}, \eqref{inf4} and the definitions of $\Delta_1$ and $\Delta_2$ in \eqref{defd1d2},
	\begin{equation*}
		\begin{split}
			\lim_{\lambda\to\infty}r_{\lambda}&=
			\lim_{\lambda\to\infty} \inf_{x,y\in M} \frac{\mathcal N_{\lambda}(x,y)}{\mathcal D_{\lambda}(x,y)}\\
			&= \lim_{\lambda\to\infty} \min\left\{
			\inf_{\operatorname{d_g}(x,y)> \eta} \frac{\mathcal N_{\lambda}(x,y)}{\mathcal D_{\lambda}(x,y)}, \cdots, 
			\inf_{c_2\lambda^{-1}<\operatorname{d_g}(x,y)\leq  \eta} \frac{\mathcal N_{\lambda}(x,y)}{\mathcal D_{\lambda}(x,y)}
			\right\}\\
			&=  \inf _{u\geq 0} \frac{\Delta_1(u)}{\sqrt{\Delta_2(u)}},
		\end{split}
	\end{equation*}
	where we have used the fact that $\Delta_1(u)/\sqrt{\Delta_2(u)} \to \sqrt{(d+4)/(3(d+2))} $ as $u\to 0$.
	\section{The excursion probability}
	\label{tail}
	
	In this section, we will prove Theorem \ref{main3}. 
	\subsection{Weyl's tube formula and  the induced metric}
	To prove Theorem  \ref{main3} we  will use a version of Weyl's tube formula, which in general  gives a power  series expansion of the volume of a tube. We refer to Chapter 10 of \cite{AT} for a comprehensive study of Weyl's tube formula. 
	
	As a first step, we will need to define the Lipshitz-Killing curvatures of a smooth, compact, $d$-dimensional, Riemannian manifold $(M,g)$. They are given by 
	\begin{equation}\label{generalformula}
		\mathcal{L}_j( M)= \begin{cases}\frac{(-2 \pi)^{-(d-j) / 2}}{\left(\frac{d-j}{2}\right) !} \scalebox{1}{$\displaystyle\int$}_M \operatorname{Tr}\left(R_{  g}^{(d-j) / 2}\right) {\operatorname {dV}}_{ g}, & d-j \text { even, } \\ 0, & d-j \text { odd, }\end{cases}
	\end{equation} 
	where $R_{  g}$ is the curvature tensor of $(M,g)$, and $\operatorname{Tr}(R_g^{(d-j)/2})$ represents the trace of the  power of order $(d-j)/2$ of the curvature tensor. We refer to Section 7.2 of \cite{AT} for the precise definition of the product of the curvature tensor and its trace. In particular, $\mathcal{L}_d(M)$ is the volume $\operatorname{V}_{ g}(M)$ and 
	$\mathcal{L}_0(M)$ is the Euler characteristic, a topological invariant, of $M$.

	The version of Weyl's tube formula that we shall need will be for locally convex submanifolds embedded in spheres.  Let $\widehat  M$ be such a manifold, of dimension $d$, embedded in $S^{N-1}$, endowed with the Riemannian metric  $\widehat  g$  induced from the standard round metric $g_{S^{N-1}}$ on $S^{N-1}$. 
	Then, (e.g.\  Theorem 10.5.7 in \cite{AT}) the volume of a tube around $\widehat  M$ with radius $\theta$ less than its critical radius is given by
	\begin{equation}\label{tubeforsphere}
		{\mu}_{N-1}\left(\operatorname{Tube}( \widehat  M, \theta)\right)=\sum_{j=0}^d F_{N, j}(\theta) \mathcal{L}_j(\widehat  M),
	\end{equation} 
	where
	\begin{equation}\label{constheta}
		F_{N, j}(\theta)=\sum_{k=0}^{\left[\frac{j}{2}\right]}(-4 \pi)^{-k} \frac{1}{k !} \frac{j !}{(j-2 k) !} G_{j-2 k, N-1+2 k-j}(\theta),
	\end{equation}  
	and, for integers $q,b\geq 0$, 
	\begin{equation}\label{Gab}
		G_{q, b}(\theta)=s_{b-1} \int_0^\theta \cos ^q(r) \sin ^{b-1}(r) d r. 
	\end{equation}
	Here $s_{b-1}={2\pi^{b / 2}}/{\Gamma\left({b}/{2}\right)}$ is the surface area of the unit sphere $S^{b-1}\subset \mathbb R^{b}$.
	
	Therefore, to prove Theorem \ref{main3}, we need to derive an asymptotic expansion for the induced metric from the standard round metric on $S^{k_\lambda-1}$ under the map  $i_\lambda: M\hookrightarrow  S^{k_\lambda-1}$. This is the same as the pullback of the Euclidean metric  $g_E$ on $\mathbb R^{k_\lambda}$ under the map $i_\lambda: M\hookrightarrow  \mathbb R^{k_\lambda}$. 	
	For any fixed point $x\in M$, we choose a sufficiently small geodesic normal coordinate around it as before. Recalling \eqref{psid} and \eqref{iiph}, we have
	\begin{equation}\label{pud}\begin{aligned}g_{\lambda}:=&i_\lambda^*(g_E)= \sum_{\ell=1}^{k_\lambda} d\psi_\ell(x)\otimes d\psi_\ell(x) =\sum_{i,j=1}^d \sum_{\ell=1}^{k_\lambda} \frac{\partial\psi_\ell}{\partial x_i}\frac{\partial\psi_\ell}{\partial x_j} dx_i\otimes dx_j 
			\\=
			&\sum_{i,j=1}^d \partial_{x_i}\partial_{y_j} P_\lambda(x,y)|_{x=y}dx_i\otimes dx_j.\end{aligned}\end{equation} 
	Recalling the estimates for the partial derivatives $\partial_{x_i}\partial_{y_j} P_\lambda(x,y)|_{x=y}$ on the diagonal in \eqref{plambda}, we finally obtain
	\begin{equation}\label{dsdsdsds}g_{\lambda}=\frac{\lambda^2}{d+2}\left(\sum_{i=1}^d dx_i\otimes dx_i+O(\lambda^{-1})\right)=\frac{\lambda^2}{d+2}\left(g+O(\lambda^{-1})\right). \end{equation} 
	We note that	the pullback metric $g_{\lambda}$ has also been studied in  \cite{N, P, Ze4, Ze3}, and it holds that
	\begin{equation} \label{cove}(d+2)g_{\lambda}/\lambda^2 \to g \quad \mbox{in $C^0$ topology}. \end{equation} 
	Actually, $g_{\lambda}$ is a stochastic metric. Briefly, given a Gaussian random field $f(x)$ on a smooth compact Riemannian manifold, one can define a Riemannian metric by 
	$$g(X, Y)=\mathbb E (Xf Yf)= X_xY_y C(x,y)|_{x=y}, $$
	where $$C(x,y)=\mathbb E (f(x)f(y))$$  is the covariance kernel of the Gaussian process.  Consequently, it is also obvious that the tools of Riemannian manifolds - connections, curvatures, etc.- can be expressed in terms of the covariance kernel. Remarkably, its curvature tensor is given by (Lemma 12.2.1 in \cite{AT}) 
	\begin{equation}\label{R}-R = \mathbb E\left\{ \left(\nabla^2 f \right)^2\right\}. \end{equation} 
	Here, the square of the Hessian is to be understood in terms of the dot product of tensors (see eq. (7.2.4) in \cite{AT}).   For a detailed study of stochastic metrics, we refer to Section 12.2 of \cite{AT}.  
	
	In our case, by \eqref{pud}, $g_\lambda$ is a stochastic metric defined via the normalized Gaussian random waves (analogue to \eqref{gausswaves}), 
	$$f_\lambda(x)=  K_\lambda(x,x)^{-1 / 2}\sum_{i=1}^{k_\lambda} c_i \varphi_i(x), $$
	where $c_i$ are i.i.d. Gaussian random variables with mean 0 and variance 1.  
	
	Now, the Levi-Civita connection of $g_\lambda$ is given by (see eq.(12.2.6) in \cite{AT}, Section 3.3 in \cite{N})
	$$\Gamma_{ijk} =\partial_{x_i}\partial_{x_j}\partial_{y_k} P_\lambda(x,y)|_{x=y}$$
	and $$\Gamma_{ij}^k=\sum_l g_\lambda^{kl} \partial_{x_i}\partial_{x_j}\partial_{y_l} P_\lambda(x,y)|_{x=y},$$
	where $( g_\lambda^{kl})_{1\leq k, l\leq d}$ is the inverse  $g_\lambda^{-1}$.  The Hessian of the Gaussian process $f_\lambda(x)$ is
	$$H_{ij}(f_\lambda):= \partial_{x_i}\partial_{x_j}f_\lambda- \sum_k \Gamma_{ij}^k\partial_{x_k} f_\lambda.$$
	Now, by formula \eqref{R}, the curvature tensor of the stochastic metric $g_\lambda$ is given by 
	$$R_{ijkl}=-\mathbb E (Q_{ijkl}) $$
	where 
	$$Q_{ijkl}=H_{ij}H_{kl}-H_{kj}H_{il}.  $$
	Again, we fix $x\in M$ and choose normal coordinate around it. By the estimates of local Weyl law \eqref{exp}-\eqref{onder}, as $\lambda$ large enough,   
	the curvature tensor of $g_\lambda$ satisfies the uniform estimate (e.g., eq. (3.17)-(3.19) in \cite{N}) 
	$$|R_{ijkl}|=O(\lambda^3).$$
	Now, let $(e_1,..,e_d)$ be the orthonormal basis with respect to $g_\lambda$,  by \eqref{dsdsdsds}, we have 
	$$(e_1,..,e_d)=\sqrt{d+2}\lambda^{-1}(I_d+O(\lambda^{-1}))\left(\frac{\partial}{\partial x_1}, ..., \frac{\partial}{\partial x_d}\right).$$
	Consequently,  the trace of  the power of  the curvature tensor of $g_\lambda$ has the uniform estimate (cf. Section 7.2 of \cite{AT}), 	
     \begin{equation}\label{bdrg} \operatorname{Tr}(R_{  g_\lambda}^{(d-j) / 2})=O(\lambda^{-(d-j)/2}), \quad \mbox{$d-j$ even}.	\end{equation}

	\subsection{Proof of Theorem \ref{main3}} Now we turn to proving Theorem \ref{main3}. 
	Recall \eqref{tubeforsphere} and \eqref{constheta}, and take  $\widehat M=i_\lambda(M)$, $\widehat g=g_\lambda=i_\lambda^*(g_E)$ and $N=k_\lambda$, i.e., 
	\begin{align}\label{ufor}
		{\mu}_{k_\lambda-1}(\operatorname{Tube}(i_\lambda(M), \theta)) \ &=\sum_{j=0}^d F_{k_\lambda, j}(\theta) \mathcal{L}_j( i_\lambda(M)).\end{align}
	By Laplace's method,  for fixed $q\geq 0$ and $\theta \in (0,\pi/2)$, as $b\to \infty$, we have
	$$
	\int_0^\theta \cos ^q(r) \sin ^{b-1}(r) d r \sim \frac{\cos^q(\theta) \sin^{b-1}(\theta) }{(b-1)\cot (\theta)}. 
	$$ 
	Consequently, 
	the leading contribution to $F_{k_{\lambda},j}(\theta)$ is
	\begin{equation}\label{gfor}	
		G_{j,k_{\lambda}-1-j}(\theta)=
		(1+o(1)) s_{k_{\lambda}-2-j} 
		\frac{\cos^j(\theta) \sin^{k_{\lambda}-2-j}(\theta) }{(k_{\lambda}-2-j)\cot (\theta)}.
	\end{equation}
	For the term $\mathcal{L}_j( i_\lambda(M))$, using \eqref{generalformula},
	\eqref{dsdsdsds} and \eqref{bdrg}, we have 
	\begin{equation}\label{ld0}
		\mathcal{L}_d( i_{\lambda}(M) )=\operatorname{V}_{ g_{\lambda}}(i_{\lambda}(M) )=\left(1+O(\lambda^{-1})\right)\frac{\lambda^d}{(d+2)^{d/2}}\operatorname{V}_{ g}(M),
	\end{equation}
	and 
	\begin{equation}\label{ld1}
		\mathcal{L}_j( i_{\lambda}(M))=O(\lambda^{d-\frac{1}{2}(d-j)} )=O(\lambda^{(d+j)/2 }), \quad \forall \, j\leq d.
	\end{equation}
	By  \eqref{gfor}, \eqref{ld0} and \eqref{ld1},  the right hand side of \eqref{ufor} is dominated by the term $j=d$ so that
	\begin{equation}\label{tubfi}
		\begin{split}
			{\mu}_{k_\lambda-1}(\operatorname{Tube}(i_\lambda(M), \theta)) &\sim F_{k_{\lambda},d}(\theta) 	\mathcal{L}_d( i_{\lambda}(M)\\
			& \sim s_{k_{\lambda}-2-d} \frac{\lambda^d}{k_{\lambda}-2-d }\frac{\sin^{k_{\lambda}-2-d}(\theta) \operatorname{V}_{ g}(M)\cos^d(\theta)  }{(d+2)^{d/2}\cot \theta}.
		\end{split} 
	\end{equation}
	Recalling  \eqref{tuebf},
	\begin{align*}
		&\mathbb{P}\left\{\sup _{x\in M}\frac{ |\Phi_\lambda(x)|}{\sqrt{K_\lambda(x,x)}}>\cos \theta \right\}=   \frac{\mu_{k_{\lambda -1}} \left(\operatorname{Tube}\left(i_\lambda(M), \theta\right) \right)}{s_{k_\lambda-1}}. 
	\end{align*}
	Dividing the second line of \eqref{tubfi} by $s_{k_{\lambda}-1}$ and using the formulas $k_{\lambda}\sim w_d \lambda^d/(2\pi)^d $, $s_{b-1}=2\pi^{b/2}/\Gamma(b/2)$ and $w_d=\pi^{d/2}/\Gamma\left({d}/{2}+1\right)$, we get
	$$
	\lim_{\lambda\to\infty}  \frac{1}{\lambda^d}\log 
	\mathbb{P}\left\{\sup _{x\in M}\frac{ \Phi_\lambda(x)}{\sqrt{K_\lambda(x,x)}}>\cos \theta \right\}= \frac{\log \sin (\theta)}{2^d\pi^{d/2} \Gamma(\frac{d}{2}+1)},
	$$
	as desired. 
	


	\section*{Acknowledgement}      
	We would like to thank Jared Wunsch for many helpful discussions on the local Weyl law. Dong Yao is supported by National Key R\&D Program of China (No. 2023YFA1010101), NSFC grant (No. 12201256) and
	NSF Jiangsu Province grant (No. BK20220677).


\begin{thebibliography}{99}
		
		
		
		\bibitem{AT}R. J.  Adler and J. E.  Taylor, \emph{Random Fields and Geometry}. Springer Monographs in Mathematics. Springer, New York, 2007. 
		
		\bibitem{AKTW}R. J.  Adler, S. Ram Krishnan, J.E. Taylor and S. Weinberger,
		Convergence of the reach for a sequence of Gaussian-embedded manifolds, \emph{Probab. Theory Related Fields} 171, no. 3-4, 1045--1091, 2018.
		
		
		\bibitem{Berry} M.V. Berry, Regular and irregular semiclassical wavefunctions, \emph{J. Phys. A.} 10,   2083--2091, 1977.
		
		\bibitem{BGG}P. B\'erard, G. Besson  and S. Gallot, Embedding Riemannian manifolds by their heat kernel, \emph{Geom. Funct. Anal.} 4, no. 4, 373--398, 1994.
		
		
		\bibitem{BC}Y. Canzani and B. Hanin, $C^\infty$ scaling asymptotics for the spectral projector of the Laplacian, \emph{J. Geom. Anal.} 28,  no. 1, 111--122, 2018.		
		
		
		
		
		\bibitem{Federer}
		H. Federer, 
		{\em Geometric Measure Theory}.
		Springer-Verlag, New York, 1969.   
		
		\bibitem{FA}R. Feng and R. J.  Adler,  Critical radius and supremum of random spherical harmonics,  \emph{Ann. Probab.} 47, no. 2, 1162--1184, 2019.
		
		\bibitem{FXA}R. Feng, X. Xu and R. J.  Adler, Critical radius and supremum of random spherical harmonics II,  \emph{Electron. Commun. Probab.} 23,  Paper No. 50, 11 pages, 2018.
		
		
		
		\bibitem{GH}P. Griffiths and  J. Harris, \emph{Principles of Algebraic Geometry}. John Wiley \& Sons, Inc., New York, 1994. 		
		\bibitem{G}L. Gass, Almost-sure asymptotics for Riemannian random waves, \emph{Bernoulli} 29, no. 1,  625--651, 2023.
		
		\bibitem{H}L. H\"ormander, The spectral function of an elliptic operator,  \emph{Acta Math.} 121, 193--218, 1968.
		
		\bibitem{HOT}
		H. Hotelling,
		\newblock Tubes and spheres in $n$-spaces and a class of statistical problems, 
		\emph{Amer. J. Math.} 61, no. 2, 440--460, 1939.
		
		
		
		\bibitem{JJ}S. Johansen and I.M. Johnstone, Hotelling's theorem on the volume of tubes: Some illustrations in simultaneous inference and data analysis, \emph{Ann. Statist}. 18,  652--684, 1990.
		
		
		\bibitem{JMS}P.W. Jones, M. Maggioni and R. Schul, Universal local parametrizations via heat
		kernels and eigenfunctions of the Laplacian, \emph{Ann. Acad. Sci. Fenn. Math}. 35, no. 1, 131--174, 2010.
		
		
		
		
		\bibitem{KT3}S. Kuriki and  A. Takemura, Tail probabilities of the maxima of multilinear forms and their applications, \emph{Ann. Statist}. 29, no. 2, 328--371, 2001.
		
		
		\bibitem{Lee}J M. Lee, \emph{Introduction to Riemannian Manifolds}. Graduate Texts in Mathematics, 176. Springer, Cham, 2018.
		
		
		
		
		\bibitem{N}L. Nicolaescu, Critical sets of random smooth functions on compact manifolds, \emph{Asian J. Math.} 19, no. 3, 391--432, 2015.
		
		\bibitem{P}E. Potash, Euclidean embeddings and Riemannian Bergman metrics, 
		\emph{J. Geom. Anal.} 26, no. 1, 499--528, 2016.
		
		
		
		\bibitem{SHu}M. A. Shubin, \emph{Pseudodifferential operators and spectral theory}.  Springer-Verlag, Berlin, 2001.
		
		\bibitem{S1}J. Sun, Tail probabilities of the maxima of Gaussian random fields, \emph{Ann. Probab.} 21, no. 1, 34--71, 1993.
		\bibitem{S}J. Sun, Expected Euler characteristic of excursion sets of random holomorphic sections on
		complex manifolds, \emph{Indiana Univ. Math. J.} 61, no. 3, 1157--1174, 2012.
		
		
		
		
		\bibitem{TK02}A. Takemura and S. Kuriki, On the equivalence of the tube and Euler
		characteristic methods for the distribution of the maximum of Gaussian fields over
		piecewise smooth domains, \emph{Ann. Appl. Probab.} 12, no. 2, 768--796, 2002.
		
		
		
		\bibitem{TA03}J. E. Taylor  and R. J. Adler,  Euler characteristics for Gaussian fields on manifolds, \emph{Ann. Probab}. 31, no. 2, 533--563, 2003.  
		
		\bibitem{TTA05}J. E.  Taylor,  A. Takemura and R. J.  Adler, Validity of the expected Euler characteristic heuristic, \emph{Ann. Probab.} 33, no. 4, 1362--1396, 2005.
		
		
		
		
		
		
		
		
		
		
		\bibitem{X}B. Xu, Derivatives of the spectral function and Sobolev norms of eigenfunctions on a closed Riemannian manifold, \emph{Ann. Global Anal. Geom.} 26, no. 3, 231--252, 2004. 
		
		\bibitem{Ze4}S. Zelditch, Real and complex zeros of Riemannian random waves, \emph{Spectral analysis in geometry and number theory}, 321--342,
		\emph{Contemp. Math.}, 484, Amer. Math. Soc., Providence, RI, 2009.
		\bibitem{Ze3}S. Zelditch, Local and global analysis of eigenfunctions on Riemannian manifolds, \emph{Handbook of geometric analysis.} No. 1, 545--658,
		Adv. Lect. Math. (ALM), 7, Int. Press, Somerville, MA, 2008.
	\end{thebibliography}
\end{document}